    \newtheorem{theorem}{Theorem}
    \newtheorem{lemma}[theorem]{Lemma}
    \newtheorem{proposition}[theorem]{Proposition}
\theoremstyle{definition} 
\newcommand{\eps}{\varepsilon}
\newcommand{\lstar}{{\raise-0.15ex\hbox{$\scriptstyle \ast$}}}
\theoremstyle{remark} 
\newcommand{\ind}{{\bf 1}}
\newcommand{\sch}{\textup{Sch}}
\newcommand{\Sineb}{\textup{Sine}_\beta}
\newcommand{\WW}{\mathcal{W}}
\begin{document}

\title{Overcrowding asymptotics for the $\Sineb$ process}%

\author{Diane Holcomb\footnote{Department of Mathematics, University of Arizona,  dianeholcomb@math.arizona.edu} \and Benedek Valk\'o\footnote{Department of Mathematics, University of Wisconsin - Madison, valko@math.wisc.edu}}%




\maketitle

\begin{abstract}
We give overcrowding estimates for the $\Sineb$ process, the bulk point process limit of the Gaussian $\beta$-ensemble. We show  that the probability of having at least $n$ points in a fixed interval  is given by $e^{-\tfrac{\beta}{2} n^2 \log(n)+\mathcal O(n^2)}$ as $n\to \infty$. We also identify the next order term in the exponent if the size of the interval goes to zero.

\end{abstract}

\section{Introduction}

The Gaussian $\beta$-ensemble is  a family of point processes on the real line given by the joint density function
\begin{align}\label{beta}
p_{m,\beta}(\lambda_1, \dots, \lambda_m)=\frac{1}{Z_{m,\beta}} \prod_{1\le i<j\le m} |\lambda_i-\lambda_j|^\beta\times  \prod_{i=1}^m e^{-\frac{\beta}{4} \lambda_i^2}.
\end{align}
Here $\beta>0$ and $Z_{m,\beta}$ is an explicitly computable normalizing constant. For the specific values of $\beta=1, 2$ and 4 one obtains the joint eigenvalue density of the classical random matrix ensembles GOE, GUE and GSE. 

We  study the $\Sineb$ process, the bulk scaling limit of the Gaussian $\beta$-ensemble. Fix $\beta>0$, and consider a sequence of finite point processes $\Lambda_{m,\beta}$ with distribution given by the density (\ref{beta}). Then as $m\to \infty$ the scaled processes $2\sqrt{m}\, \Lambda_{m,\beta}$ converge in distribution to a translation invariant point process which we call $\Sineb$. The existence and description of the limiting process in the $\beta=1, 2$ and 4 cases were obtained by Dyson, Gaudin and Mehta  (see e.g.~the monographs \cite{AGZ}, \cite{ForBook},  \cite{mehta}), while in the general $\beta$ case this was done in \cite{BVBV}. We note that in our normalization the  $\Sineb$ process has particle  density $\frac{1}{2\pi}$. 

We study the problem of overcrowding for the  $\Sineb$ process. We describe the asymptotic probability of having at least $n$ points in a fixed interval for a large $n$. Since the process is translation invariant we may assume that the interval is of the form $[0,\lambda]$ with $\lambda>0$. We will use the notation $N_\beta(\lambda)$ for the counting function of the process, for $\lambda>0$ this gives  the number of points in $[0,\lambda]$. Our main result is as follows. 

\begin{theorem}\label{thm:overcrowding}
Fix $\lambda_0>0$. Then there is a constant $c>0$ depending only on $\beta$ and $\lambda_0$ so that for any $n\ge 1$ and $0<\lambda\le \lambda_0$ we have
\begin{align}
e^{-\frac{\beta}{2} n^2 \log\left(\tfrac{n}{\lambda}\right)-c n \log(n+1) \log\left(\tfrac{n}{\lambda}\right)-c n^2}\le  P(N_\beta(\lambda)\ge n) \le e^{-\frac{\beta}{2} n^2 \log\left(\tfrac{n}{\lambda}\right)+c n \log(n+1) \log\left(\tfrac{n}{\lambda}\right)+c n^2}.
\end{align}  
\end{theorem}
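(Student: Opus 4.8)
The plan is to work with the tridiagonal/operator or stochastic-differential description of the $\Sineb$ process. Recall that the counting function $N_\beta$ of the $\Sineb$ process can be described via the $\Sineb$ operator or, equivalently, via the phase function coming from the Dufresne/Brownian carousel representation: there is a one-parameter family of diffusions $\alpha_\lambda(t)$ on the hyperbolic plane (the Brownian carousel) such that $N_\beta(\lambda) = \frac{1}{2\pi}\lim_{t\to\infty}\alpha_\lambda(t)$, counting the winding of the phase. A cleaner route for quantitative estimates is the coupling of $\Sineb$ with the Gaussian $\beta$-ensemble itself: for finite $m$ one has explicit control via the density \eqref{beta}, and the $\Sineb$ bound should be obtained as an $m\to\infty$ limit of a finite-$m$ overcrowding estimate together with a uniform tail/large-deviation bound that lets us pass to the limit. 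So the first step is to prove the analogue of the theorem for the finite ensemble $2\sqrt m\,\Lambda_{m,\beta}$: estimate $P(\#\{i: 2\sqrt m\,\lambda_i\in[0,\lambda]\}\ge n)$ from the joint density, uniformly in $m$ large.

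For the \emph{upper bound} on the finite ensemble, condition on $n$ of the points lying in $[0,\lambda/(2\sqrt m)]$ and integrate out the rest. The key gain is the Vandermonde factor $\prod_{i<j}|\lambda_i-\lambda_j|^\beta$: forcing $n$ eigenvalues into a tiny interval of length $\sim \lambda/(2\sqrt m)$ costs a factor roughly $(\lambda/(2\sqrt m))^{\beta\binom n2}$ from the pairwise interactions among those $n$ points (after rescaling, $(\lambda/n)^{\beta\binom n2}$ up to the $n^2$-type errors absorbed into the $e^{cn^2}$), which is exactly $e^{-\frac\beta2 n^2\log(n/\lambda)+O(n^2)+O(n\log n\,\log(n/\lambda))}$; the interaction of the $n$ clustered points with the remaining $m-n$ points and the Gaussian weight contribute only lower-order corrections, and the ratio of normalizing constants $Z_{m-n,\beta}/Z_{m,\beta}$ is handled by Selberg-type asymptotics. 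For the \emph{lower bound}, exhibit an explicit event of the right probability: place $n$ points in $[0,\lambda/(2\sqrt m)]$ roughly equispaced and let the remaining points follow (approximately) their unconditioned configuration; a direct lower bound on the density integral over this event, again using the Vandermonde factor and Selberg constants, gives the matching $e^{-\frac\beta2 n^2\log(n/\lambda)-\cdots}$. One then needs a uniform-in-$m$ argument — e.g. a moderate-deviation bound on $N_{m,\beta}$ near the bulk together with the known convergence $2\sqrt m\,\Lambda_{m,\beta}\Rightarrow\Sineb$ — to transfer both inequalities to $N_\beta(\lambda)$; alternatively, one can run the same computation directly on the finite tridiagonal Dumitriu–Edelman matrix model and its operator limit, avoiding the passage through the density.

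An alternative, possibly cleaner, approach for the upper bound is to use the stochastic-differential equation (Brownian carousel / $\Sineb$ diffusion): having $N_\beta(\lambda)\ge n$ forces the phase function to wind at least $n$ full turns over the "time" interval corresponding to $\lambda$, which forces the driving hyperbolic Brownian motion to perform an atypically large excursion; a Girsanov change of measure tilting the drift to produce $n$ windings costs an exponential action of order $\frac\beta2 n^2\log(n/\lambda)$, matching the claimed rate, and the fluctuations around the optimal path give the $e^{O(n^2)+O(n\log n\log(n/\lambda))}$ slack. The lower bound in this picture comes from the same change of measure, restricting to a tube around the optimal winding path.

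The main obstacle I expect is \textbf{getting the error terms down to exactly $O(n^2)+O(n\log(n+1)\log(n/\lambda))$ uniformly for all $n\ge1$ and all $0<\lambda\le\lambda_0$}, rather than just identifying the leading $-\frac\beta2 n^2\log(n/\lambda)$ term. In the density approach this means controlling (i) the Selberg normalizing-constant ratios and the Gaussian-weight contribution with $n$-dependent error no worse than $n^2$, (ii) the cross-interaction between the clustered points and the $m-n$ bulk points uniformly in $m$, and (iii) the passage to the $m\to\infty$ limit without losing the sharp constant. In the SDE approach the analogous difficulty is showing the entropy cost of the optimal winding strategy is $\frac\beta2 n^2\log(n/\lambda)(1+o(1))$ with the stated polynomial-in-$n$ and $\log(n/\lambda)$ corrections, and that no cheaper winding mechanism exists — i.e. a matching lower bound on the rate function. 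Handling the regime where $\lambda$ is very small (so $\log(n/\lambda)$ is the dominant scale even for bounded $n$) simultaneously with the regime of large $n$ and fixed $\lambda$ is what forces the somewhat unusual mixed error term, and reconciling both in one estimate is the crux of the argument.
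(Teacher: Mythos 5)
Your text is a plan, not a proof: both routes you sketch stop exactly at the point where the real work begins, and the route the paper actually takes (your second, SDE-based one) is missing its two essential ingredients. First, the paper does not perform one global Girsanov tilt that ``produces $n$ windings''; instead it exploits a renewal structure (Lemma \ref{lem:rec}): conditioning on the first hitting time $\tau_\lambda$ of $2\pi$ by the phase diffusion (\ref{sinesde}), the shifted process is again the same diffusion with $\lambda$ replaced by $\lambda e^{-\frac{\beta}{4}\tau_\lambda}$, so the overcrowding event decomposes into $n$ successive hitting-time events with exponentially decaying intensity. Second, the quantitative engine is a sharp small-time hitting estimate, $\log P(\tau_\lambda<t)=-\frac{2}{t}\WW^2(-\lambda t)+O\bigl((1+\tfrac1t)\log\tfrac{1}{\lambda t}\bigr)$ (Proposition \ref{prop:taubnd}), proved by coupling with a constant-drift phase, a Girsanov comparison with the explicit family $Y_{\lambda,a}$ from \cite{HV}, and elliptic-integral/Lambert-W asymptotics. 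Even granting these, the constant $\tfrac{\beta}{2}$ and the error term $O(n\log(n+1)\log(n/\lambda))$ do not come out of a single variational computation: the paper bootstraps a crude a priori bound (Proposition \ref{prop:trivbnd} for the upper bound, (\ref{stupidbnd}) for the lower bound) through a recursion in $n$ for the exponent $f_n$, and then solves that recursion to get $f_n=\tfrac{\beta}{2}n^2+O(n\log(n+1))$. Nothing in your sketch supplies a substitute for the recursion, the hitting-time asymptotics, or the matching lower bound on the ``winding cost,'' and you yourself identify the error terms as the crux without offering a mechanism to control them.

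Your first route (finite-$m$ density plus Selberg asymptotics) is genuinely different from the paper, but as stated it has a concrete unresolved obstruction: weak convergence $2\sqrt m\,\Lambda_{m,\beta}\Rightarrow\Sineb$ only transfers $P(N_\beta(\lambda)\ge n)$ for fixed $n$ and $\lambda$, so to obtain the theorem you would need your finite-$m$ overcrowding bounds to hold uniformly in $m$, with the sharp constant $\tfrac{\beta}{2}$ and with errors no worse than $O(n^2)+O(n\log(n+1)\log(n/\lambda))$ uniformly over $n\ge1$ and $0<\lambda\le\lambda_0$. That requires controlling the interaction of the $n$ clustered eigenvalues with the other $m-n$ points and the ratios of normalizing constants at this precision --- exactly the hardest part of that approach, which you flag but do not carry out. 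So the proposal, while pointing in reasonable directions, has genuine gaps at every step that actually produces the stated bounds.
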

The theorem implies that for a fixed $\lambda>0$ we have $P(N_\beta(\lambda)\ge n)=e^{-\frac{\beta}{2} n^2 \log n+\mathcal{O}(n^2)}$ as $n\to \infty$. If we let $n\to \infty$ and $\lambda\to 0$ simultaneously then we get   
\[
P(N_\beta(\lambda)\ge n)=e^{-\frac{\beta}{2} n^2 \log (n/\lambda)+\mathcal{O}(n^2+n\log n \log (\lambda^{-1}))}.
\]

Dyson  used Coulomb gas arguments in  \cite{Dyson62}  to predict the asymptotic size of $P(N_\beta(\lambda)=n)$ for a fixed $n$ and large $\lambda$. His predictions were made precise in \cite{BTW} for  $\beta=1, 2$ and 4 using the fact that in these classical cases the probabilities in question have  Fredholm determinant representations. For general $\beta$ the special case of $n=0$ (i.e.~the large gap probability) was treated in \cite{BVBV2}.   

Fogler and Shklovskii in \cite{FS95} extended Dyson's method to give predictions on the asymptotic  size of  $P(N_\beta(\lambda)=n)$ in the regime where $n$ and $\lambda$ are large. Although this does not cover the case we are interested in ($\lambda$ bounded, $n$ large), our results agree with  their prediction in the case when $n \gg \lambda$. Section 14.6 of \cite{ForBook} gives a nice overview of the Coloumb gas method in the study of the distribution of $N_\beta(\lambda)$.

Our proof uses techniques that we developed in \cite{HV}, where we proved a large deviation principle for $\frac{1}{\lambda}N_\beta(\lambda)$ with scale $\lambda^2$ and a good rate function of the form $\beta I(\rho)$.   One of the consequences of this large deviation  result is that  if $\rho>\frac{1}{2\pi}$ then $P(N_\beta(\lambda)\ge \rho \lambda)$ decays asymptotically as $e^{-\beta \lambda^2 I(\rho)(1+o(1))}$ as $\lambda\to \infty$. It was  shown in \cite{HV} that $I(\rho)$ grows like $\frac{1}{2}\rho^2 \log\rho$ as $\rho\to \infty$ so the decay of $P(N_\beta(\lambda)\ge \rho \lambda)$  is formally consistent  with  our Theorem \ref{thm:overcrowding}, even though  it describes a different regime of $n,\lambda$. 

Rigorous overcrowding estimates have been obtained for certain non-Hermitian random matrix models. Here one is interested in the asymptotic probability of finding at least $n$ points in a  fixed disk. In \cite{K06} Krishnapour identified the overcrowding asymptotics for the complex Ginibre ensemble and also gave similar estimates for zeros of certain Gaussian analytic functions. In \cite{AS13} and \cite{AIS14} these results were generalized to point processes obtained by considering products of complex and quaternion Ginibre ensembles. It is interesting to note that in all of these non-Hermitian cases the probability of finding at least $n$ points in a fixed disk decays like $e^{-c n^2 \log n(1+o(1))}$ where $c$ is a parameter depending only on the model and not the size of the disk. This  is of the same form as   our result for the $\Sineb$ process. 

The finite version of our problem would be to study the probability of having a large number of points in a small interval of order $m^{-1/2}$ in the bulk of the support of (\ref{beta}). This problem has been considered for the spectrum of general Wigner matrices in the context of showing the universality of the bulk limit process, see e.g.~Theorem 3.5 of \cite{ESY} or Corollary B.2 of \cite{BEYY}. These results concentrate on giving sufficiently strong (but not necessarily optimal) upper bounds for the overcrowding probabilities. 

\bigskip

The starting point of our proof is the following characterization of the counting function of $\Sineb$, due to \cite{BVBV}. 
\begin{theorem}[\cite{BVBV}]\label{thm:sine}
Let $\lambda>0$ and consider the strong solution of the following diffusion:
\begin{align}
\label{sinesde}
d\alpha_\lambda = \lambda \frac{\beta}{4} e^{-\frac{\beta}{4}t} dt + 2 \sin (\alpha_\lambda/2)dB_t, \qquad \alpha_\lambda(0)= 0.
\end{align}
Then $\lim_{t\to \infty} \frac{1}{2\pi} \alpha_\lambda(t)$ exists almost surely, and it has the same distribution as $N_\beta(\lambda).$
\end{theorem}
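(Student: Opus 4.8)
The statement is the $\Sineb$ characterization of \cite{BVBV}, and I sketch the route I would take. The backbone is the tridiagonal (Jacobi) matrix model for the Gaussian $\beta$-ensemble: $\Lambda_{m,\beta}$ has the law of the spectrum of a random symmetric tridiagonal matrix $J_m$ with independent Gaussian diagonal and independent scaled $\chi$ off-diagonal entries. The plan is three steps. (i) Represent the number of points of $2\sqrt m\,\Lambda_{m,\beta}$ in a fixed bulk interval $[0,\lambda]$ through the winding of a Pr\"ufer-type phase attached to the eigenvalue difference equation. (ii) Show that, on the natural time scale where the coordinate index is of order $mt$, this discrete phase converges to the diffusion $\alpha_\lambda$ solving \eqref{sinesde}. (iii) Push the functional ``count $=$ total winding divided by $2\pi$'' through the limit, and separately show that $\lim_{t\to\infty}\alpha_\lambda(t)$ exists almost surely.

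For steps (i)--(ii): Sturm oscillation theory identifies the number of eigenvalues of a Jacobi matrix below an energy with the number of sign changes of a solution of the associated second-order difference equation, i.e.\ with the number of crossings of a multiple of $\pi$ by the Pr\"ufer angle (which becomes multiples of $2\pi$ in the hyperbolic parametrization natural to the carousel). Writing the difference equation at the energy corresponding to the bulk point shifted by $\lambda/(2\sqrt m)$ produces a recursion for a circle- (or hyperbolic-plane-) valued variable whose deterministic part, evaluated where the semicircle density and the weight profile $e^{-\beta x^2/4}$ enter, yields the drift $\lambda\tfrac{\beta}{4}e^{-\beta t/4}\,dt$, while the $m^{-1/2}$-size fluctuations of the matrix entries aggregate, via an invariance principle, into a continuous martingale with quadratic variation $\int 4\sin^2(\alpha_\lambda/2)\,ds$. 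By a time-change / martingale-representation argument this martingale equals $\int_0^\cdot 2\sin(\alpha_\lambda(s)/2)\,dB_s$ for a suitable Brownian motion, and since $x\mapsto 2\sin(x/2)$ is globally Lipschitz the SDE \eqref{sinesde} is well posed with a unique (hence weakly unique) solution; so the whole diffusion approximation is a tightness-plus-martingale-problem identification of the rescaled phase with the law of $\alpha_\lambda$.

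For step (iii) and the $t\to\infty$ claim: the drift $\lambda\tfrac{\beta}{4}e^{-\beta t/4}$ is integrable on $[0,\infty)$, and the diffusion coefficient $2\sin(\alpha/2)$ vanishes at every point of $2\pi\Z$, so linearizing near $2\pi k$ gives $d(\alpha_\lambda-2\pi k)\approx(\text{integrable drift})\,dt\pm(\alpha_\lambda-2\pi k)\,dB_t$, a geometric-Brownian-motion-type equation whose solution converges to $0$ almost surely because $\log|\alpha_\lambda-2\pi k|$ carries the negative drift $-\tfrac12\,dt$; hence $2\pi\Z$ acts as a family of one-sided traps, $\alpha_\lambda(t)$ converges, and the limit lies in $\Z_{\ge 0}$ a.s.\ (note also that with $\lambda=0$ one has $\alpha_0\equiv0$, consistent with $N_\beta(0)=0$). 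Finally, since $2\sqrt m\,\Lambda_{m,\beta}\Rightarrow\Sineb$ and the number of points in $[0,\lambda]$ is a functional continuous along the phase convergence, the finite-$m$ identity ``count $=$ winding'' passes to the limit and gives $\lim_{t\to\infty}\alpha_\lambda(t)/(2\pi)\eqd N_\beta(\lambda)$.

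The hardest part is the uniform (in $m$) control needed to make the last step legitimate: one must establish the diffusion approximation with enough tightness and regularity that the discrete phase, its continuum limit $\alpha_\lambda$, and the $t\to\infty$ tail all agree closely enough that no point is created or lost in the limit — in particular one must rule out eigenvalues ``escaping to $t=\infty$'' by controlling how near the phases come to the next multiple of $2\pi$. This is precisely the technical heart of \cite{BVBV}. A shortcut is available once the carousel characterization of $N_\beta(\lambda)$ is known in terms of an SDE for $\alpha_\lambda$ driven by a two-dimensional noise: for a single fixed $\lambda$ that equation reduces in law to \eqref{sinesde} by matching the drift (which depends only on $t$) and the quadratic variation $\int 4\sin^2(\alpha_\lambda/2)\,ds$ of its martingale part, so the a.s.\ existence of the limit and the distributional identity with $N_\beta(\lambda)$ transfer verbatim.
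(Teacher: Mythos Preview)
The paper does not give its own proof of this statement: Theorem~\ref{thm:sine} is quoted from \cite{BVBV} and used as a black-box input for the rest of the argument. So there is nothing to compare your proposal against in this paper.

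That said, your sketch is a faithful outline of the actual argument in \cite{BVBV}: the Dumitriu--Edelman tridiagonal model for the Gaussian $\beta$-ensemble, Sturm/Pr\"ufer oscillation to express the counting function as a winding number, a diffusion approximation for the rescaled phase yielding the SDE \eqref{sinesde}, and the a.s.\ convergence of $\alpha_\lambda(t)$ via the trapping behavior at multiples of $2\pi$. You have also correctly identified both the principal technical difficulty (uniform-in-$m$ control so that no eigenvalue escapes to $t=\infty$ in the limit) and the reduction, for a single fixed $\lambda$, from the two-dimensional-noise carousel SDE to the one-dimensional form \eqref{sinesde} by matching drift and quadratic variation. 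As a high-level proof plan this is correct; the details (tightness, the precise time change that produces the $e^{-\beta t/4}$ drift, and the tail estimate) are substantial and are exactly what \cite{BVBV} supplies.
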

This describes the one-dimensional marginals of the counting function $N_\beta(\lambda)$. One can also  describe the  distribution of the whole function $N_\beta(\cdot)$ in terms of a one-parameter family of diffusions (see \cite{BVBV}), but we will not need this here. The following proposition summarizes some of the basic properties of the diffusion (\ref{sinesde}). Most of these properties are straightforward, see Proposition 9 in \cite{BVBV} for some of the proofs. 
\begin{proposition}\label{prop:prop}
Let $\lambda>0$.
\begin{enumerate}[(i)]
\item The process $\lfloor \frac{\alpha_\lambda(t)}{2\pi}\rfloor$ is nondecreasing in $t$.  In particular  for any integer $n$ we have $\alpha_\lambda(t)> 2\pi n$ if $t$ is larger than the first hitting time of $2\pi n $.

\item If $0<\lambda<\lambda'$ and $\alpha_\lambda, \alpha_{\lambda'}$ are strong solutions of (\ref{sinesde}) with the same Brownian motion $B_t$ then $\alpha_\lambda(t)\le \alpha_{\lambda'}(t)$ for all $t\ge 0$.

\item For any integer $n$ the process $\alpha_\lambda(t)+2n \pi$ solves the same SDE as (\ref{sinesde}) but  with an initial condition of $2n\pi$ instead of 0. 

\item For any $T>0$ the process $\phi(t)=\alpha_\lambda(T+t)$ solves the same SDE as (\ref{sinesde}) with $\lambda e^{-\frac{\beta}{4}T}$ in place of $\lambda$, $B_{t+T}$ in place of $B_t$ and initial condition $\phi(0)=\alpha_\lambda(T)$. 

\end{enumerate}

\end{proposition}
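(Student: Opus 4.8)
The plan is to derive all four parts from two standard ingredients: the comparison theorem for scalar stochastic differential equations, and elementary transformation rules for Brownian motion. The structural feature of (\ref{sinesde}) that makes everything work is that its diffusion coefficient $\sigma(x)=2\sin(x/2)$ is bounded with bounded derivative, hence globally Lipschitz, while the drift $\lambda\frac{\beta}{4}e^{-\frac{\beta}{4}t}$ does not depend on the spatial variable at all. This guarantees pathwise uniqueness and existence of a strong solution, and it lets the standard one-dimensional comparison theorem apply: two solutions driven by the same Brownian motion with ordered initial values and pointwise ordered drifts stay ordered for all $t\ge 0$. Part (ii) is then immediate, since for $\lambda<\lambda'$ the two equations share the diffusion coefficient and the initial value $0$, and have drifts satisfying $\lambda\frac{\beta}{4}e^{-\frac{\beta}{4}t}\le\lambda'\frac{\beta}{4}e^{-\frac{\beta}{4}t}$.

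For part (i) I would compare $\alpha_\lambda$ with the constant process $x(t)\equiv 2\pi n$. Since $\sin(\pi n)=0$, this constant solves (\ref{sinesde}) with the drift replaced by $0$, hence is a subsolution of (\ref{sinesde}). Taking $n=0$ and comparing on $[0,\infty)$ already gives $\alpha_\lambda(t)\ge 0$ for all $t$. Next fix an integer $n\ge 1$ and suppose $\alpha_\lambda(s)\ge 2\pi n$ for some $s$; by continuity and $\alpha_\lambda(0)=0$, the first hitting time $\tau_n$ of the level $2\pi n$ satisfies $\tau_n\le s$, and applying the comparison theorem on $[\tau_n,\infty)$ from the common value $\alpha_\lambda(\tau_n)=2\pi n$ yields $\alpha_\lambda(t)\ge 2\pi n$ for all $t\ge\tau_n$, in particular for $t\ge s$. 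Thus $\lfloor\alpha_\lambda(t)/(2\pi)\rfloor$ never decreases. The strict inequality $\alpha_\lambda(t)>2\pi n$ for $t>\tau_n$ follows from the same comparison together with the observation that a strictly positive drift prevents $\alpha_\lambda$ from resting at level $2\pi n$ on an interval; this is the single point where I would simply invoke Proposition 9 of \cite{BVBV}.

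Parts (iii) and (iv) are change-of-variable computations. For (iii), put $\gamma(t)=\alpha_\lambda(t)+2n\pi$; then $d\gamma=d\alpha_\lambda$, and since $\sin\!\big((\gamma-2n\pi)/2\big)=\sin(\gamma/2-n\pi)=(-1)^n\sin(\gamma/2)$, the process $\gamma$ satisfies $d\gamma=\lambda\frac{\beta}{4}e^{-\frac{\beta}{4}t}\,dt+2\sin(\gamma/2)\,d\widetilde B_t$ with $\widetilde B_t=(-1)^n B_t$ and $\gamma(0)=2n\pi$; as $\widetilde B$ is again a standard Brownian motion generating the same filtration, $\gamma$ is the strong solution of (\ref{sinesde}) started at $2n\pi$. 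For (iv), put $\phi(t)=\alpha_\lambda(T+t)$ and $\widetilde B_t=B_{T+t}-B_T$; writing the integrated form of (\ref{sinesde}) on $[T,T+t]$ and substituting $s=T+u$ turns the drift density into $\lambda\frac{\beta}{4}e^{-\frac{\beta}{4}(T+u)}=\big(\lambda e^{-\frac{\beta}{4}T}\big)\frac{\beta}{4}e^{-\frac{\beta}{4}u}$ and the stochastic integrand into $2\sin(\phi(u)/2)\,d\widetilde B_u$, so by the stationarity of Brownian increments $\phi$ solves (\ref{sinesde}) with $\lambda$ replaced by $\lambda e^{-\frac{\beta}{4}T}$, $B$ replaced by $\widetilde B$, and initial condition $\phi(0)=\alpha_\lambda(T)$.

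The only step requiring genuine care is the use of the scalar comparison theorem in parts (i) and (ii): one must verify that $x\mapsto 2\sin(x/2)$ satisfies the Yamada--Watanabe-type modulus-of-continuity hypothesis (trivial here, since it is Lipschitz) and, for (i), that the constant $2\pi n$ legitimately qualifies as a comparison solution precisely because the diffusion coefficient vanishes there. Everything else — the two substitutions, and the bookkeeping that turns the level-crossing estimate into monotonicity of the floor — is routine, and I expect to present these arguments briefly and defer to Proposition 9 of \cite{BVBV} for the portions already recorded there.
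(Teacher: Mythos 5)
Your proposal is correct and follows exactly the route the paper intends: the paper offers no written proof, calling the properties straightforward and deferring to Proposition 9 of \cite{BVBV}, and your arguments (the scalar comparison theorem with the Lipschitz diffusion coefficient $2\sin(x/2)$ and space-independent drift for (i)--(ii), and the two substitutions, including the harmless $(-1)^n$ sign flip of the Brownian motion in (iii), for (iii)--(iv)) are precisely the standard verifications being alluded to. The only genuinely delicate point, the strict inequality $\alpha_\lambda(t)>2\pi n$ after the hitting time in (i), you defer to \cite{BVBV} just as the paper does; if you wanted it self-contained, it follows because the level $2\pi n$ is inaccessible from above: near it the diffusion coefficient of $Z=\alpha_\lambda-2\pi n$ is bounded by $|Z|$ while the drift is bounded below by a positive constant on compact time intervals, so a scale-function (Feller test) comparison shows $Z$ cannot return to $0$ once positive.
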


An important consequence of the previous proposition is the following observation. 
\begin{lemma}\label{lem:rec}
Fix $n>1$ and $\lambda>0$. Let $\tau_\lambda$ be the first hitting time of $2\pi$ of the diffusion $\alpha_\lambda$. (Note that $\tau_\lambda$ might be $\infty$). Then
\begin{align}\label{recursion}
P(N_\beta(\lambda)\ge n)=E\left[ g(\tau_\lambda) \right], \quad \textup{where} \quad g(t)=P\left(N_\beta(\lambda e^{-\frac{\beta }{4} t})\ge n-1\right),\quad g(\infty)=0. 
\end{align}
\end{lemma}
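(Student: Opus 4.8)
The plan is to prove Lemma~\ref{lem:rec} by conditioning on the path of $\alpha_\lambda$ up to the hitting time $\tau_\lambda$ and applying the strong Markov property of the diffusion together with parts (i), (iii) and (iv) of Proposition~\ref{prop:prop}. First I would observe that, by part (i), having $N_\beta(\lambda)\ge n$ is equivalent to the event that $\alpha_\lambda(t)$ eventually exceeds $2\pi n$, and since $\lfloor \alpha_\lambda(t)/(2\pi)\rfloor$ is nondecreasing this forces $\alpha_\lambda$ to hit $2\pi$ first, i.e.\ on the event $\{\tau_\lambda=\infty\}$ we have $N_\beta(\lambda)\le 1<n$, which accounts for the $g(\infty)=0$ convention. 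So it suffices to work on $\{\tau_\lambda<\infty\}$.

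On $\{\tau_\lambda<\infty\}$, I would use part (iv) of Proposition~\ref{prop:prop} with $T=\tau_\lambda$: the shifted process $\phi(t)=\alpha_\lambda(\tau_\lambda+t)$ solves the same SDE as~\eqref{sinesde} but with parameter $\lambda e^{-\frac{\beta}{4}\tau_\lambda}$, driving Brownian motion $B_{t+\tau_\lambda}$, and initial condition $\phi(0)=\alpha_\lambda(\tau_\lambda)=2\pi$. By the strong Markov property, conditionally on $\cF_{\tau_\lambda}$ (hence on the value of $\tau_\lambda$), $\phi$ is distributed as a solution of~\eqref{sinesde} started from $2\pi$ with parameter $\lambda e^{-\frac{\beta}{4}\tau_\lambda}$. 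Now by part (iii), subtracting $2\pi$ turns $\phi$ into a genuine copy of the diffusion $\alpha_{\lambda e^{-\beta\tau_\lambda/4}}$ started from $0$; and since the limit $\frac{1}{2\pi}\alpha(t)$ only shifts by $1$ under this translation, on $\{\tau_\lambda<\infty\}$ we have
\[
P\big(N_\beta(\lambda)\ge n \,\big|\, \cF_{\tau_\lambda}\big)
= P\big(\tfrac{1}{2\pi}\phi(\infty)\ge n \,\big|\, \cF_{\tau_\lambda}\big)
= P\big(N_\beta(\lambda e^{-\frac{\beta}{4}\tau_\lambda})\ge n-1\big)
= g(\tau_\lambda),
\]
using Theorem~\ref{thm:sine} to identify the limit of the shifted-and-translated diffusion in law with $N_\beta(\lambda e^{-\beta\tau_\lambda/4})$. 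Taking expectations and combining with the contribution $0$ from $\{\tau_\lambda=\infty\}$ gives $P(N_\beta(\lambda)\ge n)=E[g(\tau_\lambda)]$.

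The main technical point to be careful about — and what I expect to be the only real obstacle — is the bookkeeping that translates the event $\{N_\beta(\lambda)\ge n\}$ for the original process into $\{N_\beta(\cdot)\ge n-1\}$ for the restarted one. This requires knowing that the almost-sure limit $\lim_{t\to\infty}\frac{1}{2\pi}\alpha_\lambda(t)$ is unaffected by what happens before $\tau_\lambda$ except through the starting level $2\pi$, which is exactly what Proposition~\ref{prop:prop}(iii)--(iv) and the monotonicity in (i) guarantee; one should also note the limit exists a.s.\ for the restarted diffusion with any parameter $\lambda'>0$ by Theorem~\ref{thm:sine}, including the random parameter $\lambda e^{-\beta\tau_\lambda/4}$. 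A secondary routine check is measurability: $\tau_\lambda$ is a stopping time for the filtration generated by $B$, the map $t\mapsto g(t)$ is measurable (indeed monotone in $t$ by Proposition~\ref{prop:prop}(ii), since increasing $t$ decreases the parameter $\lambda e^{-\beta t/4}$), and the conditional expectation above depends on $\cF_{\tau_\lambda}$ only through $\tau_\lambda$ because the restarted SDE's law depends on $\cF_{\tau_\lambda}$ only through $\tau_\lambda$.
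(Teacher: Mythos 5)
Your proposal is correct and follows essentially the same route as the paper: condition on the hitting time $\tau_\lambda$, use Proposition \ref{prop:prop}(iii)--(iv) (via the strong Markov property) to identify the restarted, $2\pi$-shifted process with $\alpha_{\lambda e^{-\beta\tau_\lambda/4}}$, and conclude with Theorem \ref{thm:sine} after taking expectations. Your explicit treatment of the event $\{\tau_\lambda=\infty\}$ and of the measurability of $g$ only makes precise what the paper leaves implicit.
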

\begin{proof}
According to Proposition \ref{prop:prop}, if we condition on $\{\tau_\lambda=T\}$ with a $T>0$ then the process $\{\alpha_\lambda(t+T)-2\pi, t\ge 0\}$ will have the same distribution as  $\{\alpha_{\lambda e^{-\frac{\beta}{4}T}}(t), t\ge 0\}$. Thus
\begin{align}\label{eqrec}
P(\lim_{t\to \infty} \alpha_\lambda(t)\ge 2\pi n \big| \tau_\lambda=T)=P(\lim_{t\to \infty} \alpha_{\lambda e^{-\frac{\beta}{4}T}}(t)\ge 2\pi (n-1)).
\end{align}
The statement of the lemma now follows by taking expectations in $\tau_\lambda$ and using the description of $N_\beta(\lambda)$ from Theorem \ref{thm:sine}.
\end{proof}
We record the following simple corollary of Lemma \ref{lem:rec}. 
\begin{proposition}\label{prop:trivbnd}
For any $\lambda>0$ and $n\ge 1$ we have
\begin{align}
P(N_\beta(\lambda)\ge n)\le \left(\frac{\lambda}{2\pi}\right)^n.
\end{align}
\end{proposition}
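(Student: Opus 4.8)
The plan is to induct on $n$ using the recursion in Lemma \ref{lem:rec}. For the base case $n=1$, we need $P(N_\beta(\lambda)\ge 1)\le \lambda/(2\pi)$. By the characterization in Theorem \ref{thm:sine}, $N_\beta(\lambda)\ge 1$ forces $\alpha_\lambda$ to reach $2\pi$, so $P(N_\beta(\lambda)\ge 1)\le P(\tau_\lambda<\infty)$; alternatively, one can get the bound directly from a supermartingale or optional-stopping computation on $\alpha_\lambda$, using that the drift term $\lambda\frac{\beta}{4}e^{-\frac{\beta}{4}t}$ integrates to $\lambda$ over $[0,\infty)$ while the $2\sin(\alpha_\lambda/2)\,dB_t$ term is a martingale. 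Concretely, $\alpha_\lambda(t)$ minus its drift is a martingale, so $E[\alpha_\lambda(t\wedge\tau_\lambda)]\le \lambda$ for all $t$; since $\alpha_\lambda(t\wedge\tau_\lambda)\ge 0$ and equals $2\pi$ on $\{\tau_\lambda\le t\}$ (using Proposition \ref{prop:prop}(i) to control it below $2\pi$ before $\tau_\lambda$, and that it hits $2\pi$ continuously), we get $2\pi\, P(\tau_\lambda\le t)\le \lambda$, and letting $t\to\infty$ gives $P(\tau_\lambda<\infty)\le \lambda/(2\pi)$.

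For the inductive step, assume $P(N_\beta(\mu)\ge n-1)\le (\mu/(2\pi))^{n-1}$ for all $\mu>0$. Apply Lemma \ref{lem:rec}:
\[
P(N_\beta(\lambda)\ge n)=E[g(\tau_\lambda)],\qquad g(t)=P\!\left(N_\beta(\lambda e^{-\frac{\beta}{4}t})\ge n-1\right),
\]
with $g(\infty)=0$. By the inductive hypothesis, $g(t)\le (\lambda e^{-\frac{\beta}{4}t}/(2\pi))^{n-1}=(\lambda/(2\pi))^{n-1}e^{-\frac{\beta}{4}(n-1)t}$, and this bound also covers $t=\infty$ since the right side vanishes there. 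Hence
\[
P(N_\beta(\lambda)\ge n)\le \left(\frac{\lambda}{2\pi}\right)^{n-1} E\!\left[e^{-\frac{\beta}{4}(n-1)\tau_\lambda}\right].
\]
It therefore suffices to show $E[e^{-\frac{\beta}{4}(n-1)\tau_\lambda}]\le \lambda/(2\pi)$, or even just $E[e^{-\frac{\beta}{4}\tau_\lambda}\mathbf{1}_{\tau_\lambda<\infty}]\le \lambda/(2\pi)$ (since $n-1\ge 1$ and the exponent is nonnegative, the higher power only helps).

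The main obstacle — and the crux of the argument — is the estimate $E[e^{-\frac{\beta}{4}\tau_\lambda}\mathbf{1}_{\tau_\lambda<\infty}]\le \lambda/(2\pi)$. The natural route is to apply Itô's formula to $e^{-\frac{\beta}{4}t}\alpha_\lambda(t)$: since $d\big(e^{-\frac{\beta}{4}t}\alpha_\lambda\big)=-\frac{\beta}{4}e^{-\frac{\beta}{4}t}\alpha_\lambda\,dt+e^{-\frac{\beta}{4}t}\,d\alpha_\lambda = -\frac{\beta}{4}e^{-\frac{\beta}{4}t}\alpha_\lambda\,dt+\lambda\frac{\beta}{4}e^{-\frac{\beta}{2}t}\,dt+2e^{-\frac{\beta}{4}t}\sin(\alpha_\lambda/2)\,dB_t$, the drift is $\frac{\beta}{4}e^{-\frac{\beta}{4}t}(\lambda e^{-\frac{\beta}{4}t}-\alpha_\lambda)$, which is $\le 0$ while $\alpha_\lambda\ge \lambda e^{-\frac{\beta}{4}t}$; a cleaner choice is to bound $\alpha_\lambda$ below by something that makes the drift manifestly nonpositive up to time $\tau_\lambda$. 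Optional stopping at $t\wedge\tau_\lambda$ then yields $E[e^{-\frac{\beta}{4}(t\wedge\tau_\lambda)}\alpha_\lambda(t\wedge\tau_\lambda)]\le \lambda$; on $\{\tau_\lambda\le t\}$ the integrand is $2\pi e^{-\frac{\beta}{4}\tau_\lambda}$ and on the complement it is nonnegative, so $2\pi\, E[e^{-\frac{\beta}{4}\tau_\lambda}\mathbf{1}_{\tau_\lambda\le t}]\le\lambda$, and monotone convergence finishes it. Care is needed to justify the optional-stopping step (the stochastic integral is a genuine martingale because its integrand $2e^{-\frac{\beta}{4}t}\sin(\alpha_\lambda/2)$ is bounded), and to handle the sign of the drift term correctly — this is where I expect the real work to lie, though it is a short computation once set up properly.
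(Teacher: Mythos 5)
Your inductive step is fine, but your base case has a genuine gap: it rests on the inclusion $\{N_\beta(\lambda)\ge 1\}\subseteq\{\tau_\lambda<\infty\}$, i.e.\ on the claim that $\lim_{t\to\infty}\alpha_\lambda(t)\ge 2\pi$ forces $\alpha_\lambda$ to hit $2\pi$ in finite time. That is not justified: by Proposition \ref{prop:prop}(i) the process cannot cross $2\pi$ downward, but nothing prevents it from converging to $2\pi$ from below without ever attaining it, in which case $N_\beta(\lambda)\ge 1$ while $\tau_\lambda=\infty$. In the coordinate $X=\log\tan(\alpha_\lambda/4)$ (cf.\ (\ref{sde_X})) this event is ``$X\to+\infty$ with no finite-time explosion''; the autonomous drift $\tfrac12\tanh X$ lets $X$ escape linearly at rate $\tfrac12$, and along such a trajectory the time-dependent term $\tfrac{\lambda\beta}{8}e^{-\frac{\beta}{4}t}\cosh X$ is of order $e^{(\frac12-\frac{\beta}{4})t}$, which decays for $\beta>2$. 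So for $\beta>2$ one expects $P(\lim_t\alpha_\lambda(t)=2\pi,\ \tau_\lambda=\infty)>0$, and your inequality $P(N_\beta(\lambda)\ge1)\le P(\tau_\lambda<\infty)$ then fails; the only inclusion available for free is the reverse one, $\{\tau_\lambda<\infty\}\subseteq\{N_\beta(\lambda)\ge1\}$, which is the one the paper uses. (This is also exactly why Lemma \ref{lem:rec} is stated only for $n>1$: on $\{\tau_\lambda=\infty\}$ the limit cannot reach $2\pi n$ for $n\ge 2$, but it can equal $2\pi$.) The fix is the paper's base case, which never mentions $\tau_\lambda$: from (\ref{sinesde}), $E\alpha_\lambda(t)=\lambda(1-e^{-\frac{\beta}{4}t})$, so by Fatou (using $\alpha_\lambda\ge0$) $E[\lim_t\alpha_\lambda(t)]\le\lambda$, and Markov's inequality gives $P(N_\beta(\lambda)\ge1)\le\lambda/(2\pi)$ directly. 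With that replacement your argument goes through.

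On the inductive step: it is correct and genuinely different from the paper's. The paper bounds $g(t)\le g(0)=P(N_\beta(\lambda)\ge n-1)$ by monotonicity and multiplies by $P(\tau_\lambda<\infty)\le P(N_\beta(\lambda)\ge1)\le \lambda/(2\pi)$, whereas you keep the factor $e^{-\frac{\beta}{4}(n-1)\tau_\lambda}$ and reduce to the discounted estimate $E[e^{-\frac{\beta}{4}\tau_\lambda}\ind_{\tau_\lambda<\infty}]\le \lambda/(2\pi)$. That estimate does hold, and more easily than you anticipate: there is no need for the drift of $e^{-\frac{\beta}{4}t}\alpha_\lambda(t)$ to be pointwise nonpositive; simply drop the $-\alpha_\lambda$ term and bound the expected drift integral by $\int_0^\infty \tfrac{\lambda\beta}{4}e^{-\frac{\beta}{2}s}\,ds=\lambda/2$, so optional stopping at $t\wedge\tau_\lambda$ (legitimate, since the stochastic integrand $2e^{-\frac{\beta}{4}s}\sin(\alpha_\lambda/2)$ is bounded) gives $2\pi\,E[e^{-\frac{\beta}{4}\tau_\lambda}\ind_{\tau_\lambda\le t}]\le \lambda/2$, and $t\to\infty$ finishes it. Your route yields a slightly stronger recursive inequality, though once the base case is repaired the paper's cruder step already gives $(\lambda/2\pi)^n$.
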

\begin{proof}
From (\ref{sinesde}) one gets $E\alpha_\lambda(t)=\lambda(1-e^{-\frac{\beta}{4}t})$ and using the properties of the diffusion $\alpha_\lambda$ one can show $E \lim_{t\to \infty} \alpha_\lambda(t)=\lambda$. Markov's inequality now implies the statement of the proposition for $n=1$. For larger values we can use Lemma \ref{lem:rec} and the monotonicity of $g(t)$ from (\ref{recursion}) to get
\[
P(N_\beta(\lambda)\ge n)\le P(\tau_\lambda<\infty) P(N_\beta(\lambda)\ge n-1)\le P(N_\beta(\lambda)\ge 1)P(N_\beta(\lambda)\ge n-1)
\]
from which the statement follows by induction. 
\end{proof}

Our proof of Theorem \ref{thm:overcrowding} will rely on the recursion equation (\ref{recursion}) and estimates on $P(\tau_\lambda<t)$. We will show that if we can get a good enough upper or lower bound on $P(N_\beta(\lambda)\ge n-1)$ for all $\lambda\le \lambda_0$, then we can also get  good bounds for $P(N_\beta(\lambda)\ge n)$ for all $\lambda\le \lambda_0$.
 The next section will contain the necessary hitting time estimates  while the remaining sections  will contain the proofs of the upper and lower bounds in Theorem \ref{thm:overcrowding}.

\section{Hitting time estimates}

We will estimate the hitting time $\tau_\lambda$ by coupling the diffusion $\alpha_\lambda(t)$ of (\ref{sinesde})  to a similar diffusion with a constant drift term. Let $\tilde \alpha_\lambda$ be the strong solution of the SDE
\begin{equation}\label{sdetilde}
d \tilde \alpha_\lambda = \lambda dt + 2 \sin \left( \frac{\tilde \alpha_\lambda}{2} \right) dB_t, \qquad \tilde \alpha_\lambda(0) = 0,
\end{equation}
using the same Brownian motion as (\ref{sinesde}). Denote by $\tilde \tau_\lambda$ the first hitting time of $2\pi$ by $\tilde \alpha_\lambda$. Simple coupling arguments show that 
$\alpha_\lambda(t)\le \tilde \alpha_{\lambda'}(t)$ for all $t\ge 0$ with $\lambda'=\frac{\beta}{4}\lambda$, and $\alpha_\lambda(t)\ge \tilde \alpha_{\lambda''}(t)$ for $0\le t\le T$ if $\lambda''=\frac{\beta}{4}e^{-\frac{\beta}{4}T} \lambda$. This yields
\begin{align}\label{coup1}
P(\tau_\lambda\le t)&\le P(\tilde \tau_{\lambda'}\le t), \qquad  \textup{with }\qquad \lambda'=\frac{\beta}{4}\lambda, 
\quad\textup{ and}\\
P(\tau_\lambda\le t)&\ge P(\tilde \tau_{\lambda''}\le t), \qquad  \textup{with } \qquad \lambda''=\frac{\beta}{4}e^{-\frac{\beta}{4}t} \lambda.\label{coup2}
\end{align}
Our bounds will use certain special functions which we now briefly go over. For $a<1$ we set  
\begin{align}
K(a)=\int_0^{\pi/2} \frac{dx}{\sqrt{1-a\sin^2x}}, \qquad E(a)=\int_0^{\pi/2} \sqrt{1-a \sin^2 x} dx.
\end{align}
These functions are the complete elliptic integrals of the first and second kind. We note the identity 
\begin{align}\label{Kind}
K(-a)=\frac12 \int_{-\infty}^\infty \frac{1}{\sqrt{\cosh^2 z+a}}dt, \qquad a>-1. 
\end{align}
We denote by $\WW$  the lower branch of the Lambert W function (or product log function). This is defined as the unique solution of 
\begin{align}
z=\WW(z) e^{\WW(z)} \qquad \textup{with} \qquad \WW(z)\le -1. 
\end{align}
Note that $\WW$ has domain $[-1/e,0)$, it is strictly decreasing and satisfies $\WW(x\log x)=\log x$ for $0<x\le 1/e$. 

The following proposition summarizes some of the relevant properties of the introduced special functions. 

\begin{proposition}\label{prop:asym}
There exists a constant $c>0$, so that the following bounds hold. 
\begin{align}\label{K}
&\left|K(-a)-\frac{1}{2\sqrt{a}}\log(16 a)\right|\le \frac{c}{a^{3/2}}\log(a), \quad \left|E(-a)-\sqrt{a} \right|\le \frac{c}{a^{1/2}}\log(a),\quad \textup{if $a>2$.} 
\\[5pt]
&\label{Kinv}
\hskip100pt\left|K^{-1}(x)+x^{-2} \WW^2(-x/4)  \right|\le c,\qquad \textup{if  $0<x<1/2$.}
\\[5pt]
&\label{W}
\left| \WW(- x) - \WW(-y) \right| \leq c \left|\log(x/y)\right|, \qquad | \WW(-x) | \leq c \log\left(\tfrac1{x}\right), \qquad\textup{if  $x,y \in (0, \tfrac{1}{2e}]$.}
\end{align}
\end{proposition}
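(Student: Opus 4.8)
\textit{Proof proposal.}
The three estimates are largely independent, so I would prove them in turn, treating the inversion bound \eqref{Kinv} last since it builds on \eqref{K}. For \eqref{K}, the plan is to work directly from the integral representations. Using $\sin x=1/\cosh z$ one gets the identity \eqref{Kind}, $K(-a)=\int_0^\infty(\cosh^2 z+a)^{-1/2}dz$, and the further substitution $u=\sinh z$ puts this in Legendre normal form, $K(-a)=\int_0^\infty\big((1+u^2)(1+a+u^2)\big)^{-1/2}du=(1+a)^{-1/2}K(k)$ with complementary modulus $k'=(1+a)^{-1/2}$. Then I would invoke (or re-derive, by splitting the $u$–integral at $u=\sqrt{1+a}$ into a ``bulk'' piece that produces $(1+a)^{-1/2}\log\!\big(4\sqrt{1+a}\big)$ and a ``tail'' piece of strictly smaller order) the classical near-unit-modulus expansion $K(k)=\log(4/k')+O\!\big(k'^2\log(1/k')\big)$, and finally expand $\log(4\sqrt{1+a})$ and $(1+a)^{-1/2}$ in powers of $1/a$; collecting terms gives exactly $\tfrac1{2\sqrt a}\log(16a)$ with error $O(a^{-3/2}\log a)$. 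For $E(-a)=\int_0^{\pi/2}\sqrt{1+a\sin^2 x}\,dx$ I would subtract the leading term explicitly: $\sqrt{1+a\sin^2 x}-\sqrt a\,\sin x=\big(\sqrt{1+a\sin^2 x}+\sqrt a\,\sin x\big)^{-1}$, whose integral over $[0,\pi/2]$ is $O(a^{-1/2}\log a)$ (split at $\sin x=a^{-1/2}$; on $\{\sin x\ge a^{-1/2}\}$ bound the denominator below by $\sqrt a\,\sin x$, on the complement by $1$), while $\int_0^{\pi/2}\sqrt a\,\sin x\,dx=\sqrt a$. Uniformity of the constant over $a>2$ follows from the limiting behaviour together with continuity on compact sets.

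For \eqref{W}, the key observation is that on $x\in(0,\tfrac1{2e}]$ the argument $-x$ lies in $[-\tfrac1{2e},0)$, bounded away from the branch point $-1/e$, so $\WW$ is smooth there and $\tfrac{d}{dx}\WW(-x)=\tfrac{\WW(-x)}{x\,(1+\WW(-x))}$. Since $\WW$ is strictly decreasing, $x\mapsto\WW(-x)$ is increasing, hence its maximal value on the range is $\WW(-\tfrac1{2e})\le-2$ (equivalently $-\tfrac1{2e}>-2e^{-2}$, which holds because $e<4$); thus $|\WW(-x)|\ge2$ and so $|1+\WW(-x)|\ge\tfrac12|\WW(-x)|$, giving $\big|\tfrac{d}{dx}\WW(-x)\big|\le 2/x$. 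Integrating between $x$ and $y$ yields the Lipschitz bound with $c=2$. For the size bound, set $w=-\WW(-x)\ge1$; the defining relation becomes $we^{-w}=x$, i.e.\ $w=\log(1/x)+\log w$, and using $\log w\le\tfrac12 w+C_0$ for an absolute $C_0$ one gets $w\le 2\log(1/x)+2C_0$, which is $\le c\log(1/x)$ since $\log(1/x)\ge\log(2e)$ on the range.

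The bound \eqref{Kinv} is the crux, and I expect it to be the main obstacle. The idea is that the candidate value $a^\ast:=x^{-2}\WW^2(-x/4)$ is, by design, an \emph{exact} root of the leading asymptotic in \eqref{K}. Writing $W=\WW(-x/4)\le-1$, the relation $We^W=-x/4$ gives $e^{-W}=4|W|/x$, whence
\[
\log(16a^\ast)=\log\!\big(16W^2/x^2\big)=\log\!\big((4|W|/x)^2\big)=-2W=2|W|,\qquad \sqrt{a^\ast}=|W|/x,
\]
so $\tfrac1{2\sqrt{a^\ast}}\log(16a^\ast)=x$ precisely. One checks $a^\ast>2$ for all $x\in(0,\tfrac12)$ (it is $\gtrsim 40$ at $x=\tfrac12$ and $\to\infty$ as $x\to0$), so \eqref{K} applies at $a^\ast$ and gives $K(-a^\ast)=x+O\!\big((a^\ast)^{-3/2}\log a^\ast\big)$, while $K(-\tilde a)=x$ where $\tilde a:=-K^{-1}(x)$.

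It remains to convert the tiny discrepancy in $K$–values into $\tilde a-a^\ast=O(1)$. Set $h(a)=K(-a)$; it is strictly decreasing with $|h'(a)|=\tfrac12\int_0^{\pi/2}\sin^2 x\,(1+a\sin^2 x)^{-3/2}dx\asymp a^{-3/2}\log a$ for large $a$, the lower bound coming again from the same bulk/tail split of the integral. First this confines $\tilde a$ to $[a^\ast/2,2a^\ast]$: if $\tilde a$ were outside this interval, then $|h(\tilde a)-h(a^\ast)|\gtrsim (a^\ast)^{-1/2}\log a^\ast$, which for $a^\ast$ large exceeds the actual gap $O\!\big((a^\ast)^{-3/2}\log a^\ast\big)$. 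On that interval the mean value theorem gives $|\tilde a-a^\ast|\le |h(\tilde a)-h(a^\ast)|/\inf_{[a^\ast/2,2a^\ast]}|h'|=O(1)$, since numerator and denominator are both of order $a^{-3/2}\log a$; hence $K^{-1}(x)+x^{-2}\WW^2(-x/4)=-(\tilde a-a^\ast)=O(1)$. The only delicate point is bookkeeping — making sure every implied constant above is uniform over $x\in(0,\tfrac12)$ — but no new ideas are needed beyond \eqref{K} and the defining identity of $\WW$.
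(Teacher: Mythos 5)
Your proposal is correct, and for \eqref{Kinv} and \eqref{W} it follows essentially the route the paper sketches: you found the same key fact — that $x^{-2}\WW^2(-x/4)$ is the \emph{exact} inverse of the leading term $\tfrac{1}{2\sqrt a}\log(16a)$ — and you use the same derivative formula $\WW'(z)=\tfrac1z\cdot\tfrac{\WW(z)}{1+\WW(z)}$ for the Lipschitz and size bounds on $\WW$. Your two-sided estimate $\left|\partial_a K(-a)\right|\asymp a^{-3/2}\log a$ combined with the mean value theorem (after first confining $-K^{-1}(x)$ to $[a^\ast/2,2a^\ast]$, with the non-small-$x$ range handled by boundedness) is a correct way of carrying out what the paper dismisses as a ``simple exercise,'' and the constants are indeed uniform once the compact range of $x$ is treated by continuity. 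The only genuine difference is \eqref{K}: the paper simply imports these estimates from the Appendix of \cite{HV}, whereas you re-derive them from the representation \eqref{Kind}, the Legendre normal form with complementary modulus $(1+a)^{-1/2}$, the classical near-unit-modulus expansion of $K$, and the elementary subtraction trick for $E$; this makes the proposition self-contained at the cost of invoking (or reproving) standard elliptic-integral asymptotics, but buys nothing beyond what the citation already provides.
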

\begin{proof}[Outline of the proof] The estimates (\ref{K}) are from the Appendix of \cite{HV}. The definition of $\WW$ allows us to check that the inverse of the function $\frac{1}{2\sqrt{a}}\log(16 a)$ on $[e^2/16,\infty)$ is exactly $x^{-2}\WW^2(-x/4)$. From this fact and the first bound in (\ref{K}) the proof of (\ref{Kinv}) is a simple exercise. Finally, (\ref{W}) follows from the observation that $\WW'(x)=\frac{1}{x}\cdot \frac{\WW(x)}{1+\WW(x)}$. 
\end{proof}

Now we are ready to state the estimates on the hitting times $\tau_\lambda$ and $\tilde \tau_\lambda$. 

\begin{proposition}
\label{prop:taubnd}
There exist positive constants $\eps$ and $c$  (depending only on $\beta$) so that if $\lambda t<\eps$ then
\begin{equation}
\label{tauUB}
P( \tau_\lambda < t) \le  \exp \left[ - \frac{2}{t} \WW^2( -\lambda t) + c\left(1+ \frac{1}{t}\right) \log ( \tfrac{1}{\lambda t})\right].
\end{equation}
For any $\lambda_0>0$ there exist positive constants $\eps, c$ (depending only on $\lambda_0, \beta$) so that if  $0<\lambda\leq \lambda_0$,  $\lambda t< \eps$ and $t \leq \eps \log (\frac{1}{\lambda t})$ then
\begin{equation}
\label{tauLB}
P( \tau_\lambda < t) \geq  \exp \left[ - \frac{2}{t} \WW^2( -\lambda t) - c\left(1+ \frac{1}{t}\right) \log ( \tfrac{1}{\lambda t})\right].
\end{equation}
Moreover, the same statements hold for the hitting time $\tilde \tau_\lambda$ (with possibly different constants $\eps$ and $c$). 
\end{proposition}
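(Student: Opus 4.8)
The plan is to prove the two bounds directly for the constant--drift diffusion $\tilde\alpha_\lambda$ of (\ref{sdetilde}) and its hitting time $\tilde\tau_\lambda$, and to deduce the statements for $\tau_\lambda$ from the couplings (\ref{coup1})--(\ref{coup2}). This transfer costs almost nothing: replacing $\lambda$ by $\tfrac\beta4\lambda$, resp.\ by $\tfrac\beta4 e^{-\frac\beta4 t}\lambda$, multiplies $\lambda t$ by a factor that is bounded, resp.\ of the form $e^{O(t)}$, so by the first inequality in (\ref{W}) it perturbs $\WW(-\lambda t)$ by $O(1+t)$, and hence (using $|\WW(-\lambda t)|=O(\log\tfrac1{\lambda t})$ from the second inequality) perturbs $\tfrac2t\WW^2(-\lambda t)$ by $O\big((1+\tfrac1t)\log\tfrac1{\lambda t}\big)$ whenever $t=O(\log\tfrac1{\lambda t})$. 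The latter is a hypothesis in (\ref{tauLB}), and for (\ref{tauUB}) we may assume it, since otherwise the right-hand side of (\ref{tauUB}) already exceeds $1$. So it suffices to treat $\tilde\tau_\lambda$.

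The whole analysis is organized around the short-time large-deviation heuristic $P(\tilde\tau_\lambda<t)=e^{-(1+o(1))\mathcal S}$, where $\mathcal S$ is the infimum of $\tfrac12\int_0^t(\dot\gamma-\lambda)^2/(4\sin^2(\gamma/2))\,ds$ over curves $\gamma$ from $0$ to $2\pi$ traversed in time $t$. The minimizer conserves the Hamiltonian $H(x,p)=\lambda p+2\sin^2(x/2)p^2$; writing $H\equiv\aK$ along it and solving for $p$ yields
\[
p_\aK(x)=\frac{\sqrt{\lambda^2+8\aK\sin^2(x/2)}-\lambda}{4\sin^2(x/2)},
\]
a \emph{bounded} function of $x$ (with one-sided limits $\aK/\lambda$ at $0$ and $2\pi$), together with velocity $\dot\gamma=\sqrt{\lambda^2+8\aK\sin^2(x/2)}$, so the substitution $x=2u$ turns the constraint $\int_0^{2\pi}d\gamma/\dot\gamma=t$ into $\tfrac4\lambda K(-8\aK/\lambda^2)=t$. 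Then $8\aK/\lambda^2=-K^{-1}(\lambda t/4)$, and (\ref{Kinv}) and (\ref{W}) give $\aK=\tfrac2{t^2}\WW^2(-\lambda t)+O(\lambda^2)+O(\tfrac1{t^2}\log\tfrac1{\lambda t})$; using $H\equiv\aK$ one has $\mathcal S=\int_0^{2\pi}p_\aK(x)\,dx-\aK t$, and the asymptotics of $K$ from (\ref{K}) show the two terms differ only at lower order, so $\mathcal S=\aK t+O(\sqrt\aK)=\tfrac2t\WW^2(-\lambda t)+O\big((1+\tfrac1t)\log\tfrac1{\lambda t}\big)$. The two sides of the heuristic are now proved separately.

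For the upper bound, take $\aK>0$ with $\tfrac4\lambda K(-8\aK/\lambda^2)=t$, set $\phi(x)=\int_0^x p_\aK$, $\mu=\aK+\sup_{[0,2\pi]}2\sin^2(x/2)p_\aK'(x)$, and $M_s=\exp\big(\phi(\tilde\alpha_\lambda(s))-\mu s\big)$. By It\^o and the identity $H(x,p_\aK(x))\equiv\aK$, the finite-variation part of $dM_s$ is $M_s\big(2\sin^2(\tilde\alpha_\lambda(s)/2)p_\aK'(\tilde\alpha_\lambda(s))+\aK-\mu\big)\,ds\le 0$; since $p_\aK$, hence $\phi$, are bounded, $M$ is a bounded supermartingale with $M_0=1$, so $E[M_{\tilde\tau_\lambda\wedge t}]\le 1$, and on $\{\tilde\tau_\lambda<t\}$ one has $M_{\tilde\tau_\lambda}\ge e^{\phi(2\pi)-\mu t}$ (as $\mu>0$). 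Hence $P(\tilde\tau_\lambda<t)\le e^{-\phi(2\pi)+\mu t}=e^{-\mathcal S+(\mu-\aK)t}$, and a direct estimate of $p_\aK$ and $p_\aK'$ gives $p_\aK'\le 0$ on $(0,\pi)$ and $\mu-\aK=O(\sqrt\aK)$, so $(\mu-\aK)t=O(\log\tfrac1{\lambda t})$; this is (\ref{tauUB}), with no extra hypothesis on $t$ needed.

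For the lower bound I use a Girsanov tilt. Let $\drift(x)=2\sin(x/2)p_\aK(x)$ (bounded, vanishing at $0,2\pi$) and define $Q$ by $\tfrac{dQ}{dP}\big|_{\mathcal F_t}=\exp\big(\int_0^t\drift(\tilde\alpha_\lambda)\,dB-\tfrac12\int_0^t\drift(\tilde\alpha_\lambda)^2\,ds\big)$, so that under $Q$ the drift of $\tilde\alpha_\lambda$ becomes $\sqrt{\lambda^2+8\aK\sin^2(\tilde\alpha_\lambda/2)}$ (choose $\aK$ very slightly larger than above, so the associated deterministic crossing time is $(1-o(1))t$). Since $\drift(x)^2=2(\aK-\lambda p_\aK(x))\le 2\aK$ pointwise, on the event $G:=\{\tilde\tau_\lambda<t\}\cap\{\sup_{s\le t}|\int_0^s\drift(\tilde\alpha_\lambda)\,d\widehat B|\le K\}$, where $\widehat B$ is a $Q$-Brownian motion, $K=(32\aK t)^{1/2}$ and $Q(\sup>K)\le\tfrac14$ by Doob's inequality, the density $\tfrac{dP}{dQ}\big|_{\mathcal F_{\tilde\tau_\lambda}}=\exp\big(-\int_0^{\tilde\tau_\lambda}\drift(\tilde\alpha_\lambda)\,d\widehat B-\tfrac12\int_0^{\tilde\tau_\lambda}\drift(\tilde\alpha_\lambda)^2\,ds\big)$ is at least $e^{-K-\aK t}\ge e^{-\mathcal S-O((1+1/t)\log(1/(\lambda t)))}$; therefore $P(\tilde\tau_\lambda<t)\ge e^{-\mathcal S-O((1+1/t)\log(1/(\lambda t)))}\,Q(G)$, and it remains to show $Q(\tilde\tau_\lambda<t)\ge\tfrac12$ (then $Q(G)\ge\tfrac14$) under the tilted law. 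I expect this last step to be the main obstacle, together with the parallel point in the upper-bound comparison: because $\sin^2(\tilde\alpha_\lambda/2)$ degenerates at $0$ and $2\pi$, the point $0$ is an entrance boundary for both $\tilde\alpha_\lambda$ and its $Q$-version, and one must show that under $Q$ the process leaves a neighborhood of $0$ and reaches $2\pi$ within time $t$ with probability bounded away from zero, spending only $o(t)$ time in the degenerate regions. Matching the leading constant precisely is exactly what forces using the sharp elliptic-integral asymptotics of Proposition \ref{prop:asym} rather than a cruder choice of $\aK$, and the hypothesis $t\le\eps\log\tfrac1{\lambda t}$ of (\ref{tauLB}) is what keeps the resulting $O(t)$-type corrections within the allowed error.
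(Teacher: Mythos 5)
Your overall architecture mirrors the paper's: an optimal exponential tilt built from the elliptic-integral momentum (equivalently the comparison diffusion $Y_{\lambda,a}$ of \cite{HV}), sharp asymptotics from Proposition \ref{prop:asym}, and transfer from $\tilde\tau_\lambda$ to $\tau_\lambda$ via the couplings (\ref{coup1})--(\ref{coup2}). Your upper bound is essentially a self-contained re-derivation of the paper's route: the supermartingale $M_s=\exp(\phi(\tilde\alpha_\lambda(s))-\mu s)$ with $\phi'=p_{\aK}$ and the Hamiltonian identity is exactly the exponential martingale underlying the moment generating function bound of Proposition \ref{prop:taumgf}, and your exponent $\int_0^{2\pi}p_{\aK}\,dx-\mu t$ matches $\lambda((1+a)K(-a)-E(-a))-\tfrac{\lambda^2a}{8}t-\tfrac{\lambda\sqrt a}{4}t$ with $a=8\aK/\lambda^2=-K^{-1}(\lambda t/4)$; modulo the routine verification that $2\sin^2(x/2)p_{\aK}'(x)=O(\sqrt{\aK})$, this half is fine and not meaningfully different from the paper's Markov-inequality argument.

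The lower bound, however, has a genuine gap at exactly the step you flag as "the main obstacle": you never prove $Q(\tilde\tau_\lambda<t)\ge\tfrac12$ under the tilted law, and this is the entire technical content of the paper's Lemma \ref{lemma:lowerY}. Moreover, the specific bound you hope for is doubtful as stated. To keep the leading constant you can only enlarge $\aK$ by a factor $1+O(1/\log\tfrac1{\lambda t})$ (otherwise $\aK t$, hence the exponent, changes at leading order), so the deterministic crossing time exceeds the allotted time $t$ only by a margin of relative size $O(1/\log\tfrac1{\lambda t})$; showing the random crossing time lands inside such a narrow window with probability $\ge\tfrac12$ would require a quantitative concentration estimate that you do not supply, and which is not obviously true — the entrance behaviour at $0$ and $2\pi$, where the diffusion coefficient degenerates and the drift is only $\lambda$, makes the crossing-time fluctuations comparable to the window. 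The paper resolves precisely this by sandwiching the tilted process between the explicit processes $Z_\pm$ on the event $H=\{\sup_{s\le\theta}|B_s|\le\eps_2\}$ and invoking a small-ball estimate for Brownian motion; this yields only $P\ge c\sqrt{K(-a)}$ rather than a constant, which is why the paper's error bookkeeping absorbs an extra $\log(\tfrac1{\lambda t})$-type factor instead of aiming for probability $\tfrac12$. Without an argument of this kind (or some substitute concentration bound for the tilted hitting time), your proof of (\ref{tauLB}) is incomplete.
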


We will prove the upper and lower bounds separately. In both cases we will prove the estimate for $\tilde \tau_\lambda$ first, then use (\ref{coup1}) and (\ref{coup2}) to get the corresponding statement for $\tau_\lambda$. 

\subsection{Proof of the lower bounds in Proposition \ref{prop:taubnd}}

By introducing the new variable $X_\lambda=\log(\tan(\tilde \alpha_\lambda/4))$ we can transform (\ref{sdetilde}) into the SDE 
\begin{equation}\label{sde_X}
dX_\lambda = \frac{\lambda}{2} \cosh X_\lambda \ dt + \frac{1}{2} \tanh X_t \ dt + dB_t, \qquad X_\lambda(0) = -\infty.
\end{equation}
Under this transformation $\tilde \tau_\lambda$ becomes the hitting time of $\infty$ for the process $X_\lambda$. Following \cite{HV} we compare $X_\lambda$ with another family of diffusions. For $a\ge -1$ consider the SDE
\begin{equation}\label{sde_Y}
dY_{\lambda, a} = \frac{\lambda}{2} \sqrt{ \cosh^2 Y_{\lambda, a} + a } \ dt + \frac{1}{2} \tanh Y_{\lambda, a} \ dt + dB_t, \qquad Y_{\lambda, a}(0) = - \infty,
\end{equation}
and  denote by $\tau_{Y,\lambda,a}$  the hitting time of $\infty$.  In \cite{HV} the Cameron-Martin-Girsanov formula was  used for diffusions with explosions to compare $X_\lambda$ and $Y_{\lambda, a}$ and the following results were obtained.
\begin{proposition}[\cite{HV}]
For $a>-1$ and $s_1<s_2$ we have
\begin{align}\label{CMG1}
P(\tilde \tau_\lambda\in [s_1,s_2])=E\left[ \ind(\tau_{Y,\lambda,a}\in [s_1,s_2]) e^{-G_{\tau_{Y,\lambda,a}}(Y)}  \right].
\end{align}
Here $G_{\tau_{Y,\lambda,a}}(Y)$ is the Girsanov factor for which the following estimate holds:
\begin{align}\label{CMG2}
\left|G_{\tau_{Y,\lambda,a}}(Y)+\frac{\lambda^2a\tau_{Y,\lambda,a}}{8}-\lambda ((1+a)K(-a)-E(-a))    \right|\le \frac{\lambda \sqrt{|a|}}{4} \tau_{Y,\lambda,a}.
\end{align}
\end{proposition}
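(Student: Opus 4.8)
The statement is a result of \cite{HV}; I sketch the proof, which is a Cameron--Martin--Girsanov computation followed by a deterministic reduction. Write $b(y)=\tfrac\lambda2\cosh y+\tfrac12\tanh y$ and $b_a(y)=\tfrac\lambda2\sqrt{\cosh^2 y+a}+\tfrac12\tanh y$ for the drifts of the diffusions $X_\lambda$ of \eqref{sde_X} and $Y_{\lambda,a}$ of \eqref{sde_Y}, respectively; both have unit diffusion coefficient, both enter from $-\infty$ and are absorbed at $+\infty$, and they are driven by the same Brownian motion. Their drift difference
\[
\drift_a(y):=b(y)-b_a(y)=\tfrac\lambda2\bigl(\cosh y-\sqrt{\cosh^2 y+a}\bigr)=\frac{-\lambda a}{2\bigl(\cosh y+\sqrt{\cosh^2 y+a}\bigr)}
\]
is a bounded smooth function decaying like $1/\cosh y$ at $\pm\infty$. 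The plan is: (i) run Girsanov to produce \eqref{CMG1} with an explicit exponent $G$; (ii) convert the stochastic part of $G$ into a deterministic constant plus a bounded-variation integral, via It\^o applied to a primitive of $\drift_a$; (iii) evaluate the constant as an elliptic integral; (iv) bound the remaining integral pointwise.

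For step (i): since $\drift_a$ is bounded, the exponential $\mathcal E_t=\exp\bigl(\int_0^{t\wedge\tau}\drift_a(Y_s)\,dB_s-\tfrac12\int_0^{t\wedge\tau}\drift_a(Y_s)^2\,ds\bigr)$, with $\tau:=\tau_{Y,\lambda,a}$ and $B$ the driving Brownian motion of \eqref{sde_Y}, satisfies Novikov's criterion on every finite time interval and is hence a true martingale; tilting the law of $Y_{\lambda,a}$ by $\mathcal E_T$ changes its drift from $b_a$ to $b$, so up to time $T$ the tilted process is distributed as $X_\lambda$. Applying this to the event $\{\tau\in[s_1,s_2]\}$ with any $T\ge s_2$ (no mass is lost at the explosion time, since the event forces $\tau\le s_2$) gives \eqref{CMG1} with
\[
G_\tau(Y)=-\int_0^\tau\drift_a(Y_s)\,dB_s+\tfrac12\int_0^\tau\drift_a(Y_s)^2\,ds .
\]
The delicate point is that both diffusions \emph{enter from $-\infty$}: one makes this rigorous by starting the coupled pair at $-m$ and letting $m\to\infty$, using that $\drift_a$ vanishes at $-\infty$ so the two entrance laws are comparable and the Girsanov densities converge; this is exactly the explosion/Girsanov framework built in \cite{HV} (following \cite{BVBV}), and I would quote it.

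For steps (ii)--(iii): let $F_a$ be an antiderivative of $\drift_a$, with finite limits at $\pm\infty$ because $\drift_a$ decays. Writing $dB_s=dY_s-b_a(Y_s)\,ds$, applying It\^o to $F_a(Y_s)$ on $[0,\tau]$, and using $Y_0=-\infty$, $Y_\tau=+\infty$, one gets
\[
\int_0^\tau\drift_a(Y_s)\,dB_s=\int_{-\infty}^{\infty}\drift_a(y)\,dy-\tfrac12\int_0^\tau\drift_a'(Y_s)\,ds-\int_0^\tau\drift_a(Y_s)\,b_a(Y_s)\,ds .
\]
Substituting into $G_\tau$ and using the algebraic identity $\drift_a\,b_a+\tfrac12\drift_a^2=\tfrac12(b^2-b_a^2)=-\tfrac{\lambda^2 a}{8}+\tfrac12\tanh(\cdot)\,\drift_a(\cdot)$ collapses the terms proportional to $\tau$ and leaves
\[
G_\tau+\tfrac{\lambda^2 a\,\tau}{8}=-\int_{-\infty}^{\infty}\drift_a(y)\,dy+\tfrac12\int_0^\tau\bigl(\drift_a'(Y_s)+\tanh(Y_s)\,\drift_a(Y_s)\bigr)\,ds .
\]
It remains to identify $-\int_{-\infty}^{\infty}\drift_a(y)\,dy=\tfrac\lambda2\int_{-\infty}^\infty\bigl(\sqrt{\cosh^2 y+a}-\cosh y\bigr)\,dy$ with $\lambda\bigl((1+a)K(-a)-E(-a)\bigr)$, i.e.\ to check $\int_{-\infty}^\infty\bigl(\sqrt{\cosh^2 y+a}-\cosh y\bigr)\,dy=2\bigl((1+a)K(-a)-E(-a)\bigr)$; differentiating both sides in $a$ gives $K(-a)$ — the left side immediately via \eqref{Kind}, the right side from the classical derivative formulas for $K$ and $E$ — and both sides vanish at $a=0$.

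For step (iv): a short computation gives $\drift_a'(y)=-\tfrac{\sinh y}{\sqrt{\cosh^2 y+a}}\,\drift_a(y)$, hence
\[
\drift_a'(y)+\tanh(y)\,\drift_a(y)=-\frac{\lambda a^2\tanh y}{2\,\sqrt{\cosh^2 y+a}\,\bigl(\cosh y+\sqrt{\cosh^2 y+a}\bigr)^2} .
\]
Using $\sqrt{\cosh^2 y+a}\ge|\sinh y|$ (valid since $a>-1$) and a one-line case split according to whether $\cosh y\ge\sqrt{|a|}$ (when $\cosh y<\sqrt{|a|}$ one has $a>1$, and can instead use $\sqrt{\cosh^2 y+a}\ge\sqrt a$), the right side is bounded in absolute value by $\tfrac{\lambda\sqrt{|a|}}{2}$ for every $y$; therefore the error integral in the previous display is at most $\tfrac{\lambda\sqrt{|a|}}{4}\,\tau$ in absolute value, which is exactly \eqref{CMG2}. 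The main obstacle is step (i): making the Girsanov change of measure rigorous for \emph{explosive} diffusions \emph{entering from $-\infty$}, i.e.\ justifying the $m\to\infty$ entrance limit and the convergence of the exponential martingales up to the (possibly infinite) explosion time; the decay of $\drift_a$ at $\pm\infty$ is what makes this work, and steps (ii)--(iv) are then routine computation.
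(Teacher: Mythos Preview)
The paper does not actually give a proof of this proposition: immediately after stating it, the text says ``The proof of these statements are contained in Section 3 of \cite{HV}.'' So there is no in-paper proof to compare against; the proposition is simply imported.

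Your sketch is a correct and complete reconstruction of the Cameron--Martin--Girsanov argument from \cite{HV}. The algebra in steps (ii)--(iv) checks out: the identity $\drift_a b_a+\tfrac12\drift_a^2=\tfrac12(b^2-b_a^2)=-\tfrac{\lambda^2 a}{8}+\tfrac12\tanh(\cdot)\,\drift_a(\cdot)$ is correct, the elliptic-integral identification via differentiation in $a$ and \eqref{Kind} is correct (both sides have $a$-derivative $K(-a)$ and vanish at $a=0$), and the pointwise bound in step (iv) works exactly as you describe --- in the case $\cosh y\ge\sqrt{|a|}$ one combines $|\tanh y|/\sqrt{\cosh^2 y+a}\le 1/\cosh y$ with $(\cosh y+\sqrt{\cosh^2 y+a})^2\ge\cosh^2 y\ge|a|$ to get $\le\tfrac{\lambda a^2}{2\cosh^3 y}\le\tfrac{\lambda\sqrt{|a|}}{2}$, while for $\cosh y<\sqrt{|a|}$ (forcing $a>1$) the bound $\sqrt{\cosh^2 y+a}\ge\sqrt a$ yields denominator $\ge a^{3/2}$ directly. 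You are also right that the only genuinely delicate point is step (i), the entrance/explosion Girsanov, and that the decay of $\drift_a$ at $\pm\infty$ is what makes the $m\to\infty$ limit go through; this is precisely the machinery developed in \cite{HV}.
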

The proof of these statements are contained in Section 3 of \cite{HV}. Note the slight notational inconsistency,  in the statements in \cite{HV} $-a$ is used in place of $a$. 

Our lower bound proof will rely on the following lemma, which will be proved at the end of this subsection. 
\begin{lemma}\label{lemma:lowerY}
There is a constant $c>0$ so that for any $a, \lambda$ with  $\lambda \sqrt{a}\ge 2$ and $a>2$ we have
\begin{align}
P\left(\lambda \tau_{Y,\lambda,a}\in \left[4K(-a)(1- \tfrac{5}{\lambda \sqrt{a}}), 4 K(-a)(1+\tfrac{5}{\lambda \sqrt{a}})\right]\right)\ge c \sqrt{K(-a)}.
\end{align}
\end{lemma}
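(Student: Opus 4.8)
The plan is to exploit that, under the hypotheses $\lambda\sqrt a\ge 2$ and $a>2$, the drift of the diffusion \eqref{sde_Y} is everywhere at least $\tfrac\lambda2\sqrt a-\tfrac12\ge\tfrac12$ and grows like $\tfrac\lambda2 e^{|y|}$ at $\pm\infty$, so $\tau:=\tau_{Y,\lambda,a}$ is a.s.\ finite with all moments and concentrates tightly around the deterministic time $T_0:=\tfrac4\lambda K(-a)$ that it takes the solution of the drift-only ODE $\dot y=\tfrac\lambda2\sqrt{\cosh^2 y+a}$ to run from $-\infty$ to $+\infty$ (this value is $\tfrac2\lambda\int_{-\infty}^\infty\tfrac{dz}{\sqrt{\cosh^2 z+a}}=\tfrac4\lambda K(-a)$ by \eqref{Kind}). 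The first step is a change of variables that linearizes this clock. Put $F(y)=\tfrac2\lambda\int_{-\infty}^y\tfrac{du}{\sqrt{\cosh^2 u+a}}$, a strictly increasing bijection $[-\infty,\infty]\to[0,T_0]$ with $F'(y)=\tfrac2{\lambda\sqrt{\cosh^2 y+a}}$ and $F'(y)\cdot(\text{drift in }\eqref{sde_Y})=1+\tfrac{\tanh y}{\lambda\sqrt{\cosh^2 y+a}}$. Applying It\^o's formula to $F(Y_{\lambda,a}(t))$ on $[\delta,\tau-\delta]$ and letting $\delta\downarrow0$ — legitimate since $F(Y_{0+})=0$, $F(Y_{\tau-})=T_0$, and $|F'|,|F''|\le 2/(\lambda\sqrt a)$ while $\tau<\infty$ — yields the exact identity
\[
\tau=T_0-\int_0^\tau \mathfrak r(Y_s)\,ds-M_\tau,\qquad M_t:=\int_0^t F'(Y_s)\,dB_s,\quad \mathfrak r(y):=\tfrac{\tanh y}{\lambda\sqrt{\cosh^2 y+a}}+\tfrac12 F''(y),
\]
in which $|\mathfrak r(y)|\le 2/(\lambda\sqrt{\cosh^2 y+a})$.

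The second step reads off from this identity that $\tau$ is $O(T_0)$ and that the bounded-variation term is only a small fraction of $T_0$. Taking expectations (or comparing directly with the constant-drift diffusion \eqref{sdetilde}) and absorbing $E\int_0^\tau|\mathfrak r(Y_s)|\,ds\le\tfrac2{\lambda\sqrt a}E\tau$ into the left-hand side gives $E\tau\le A\,T_0$ for an absolute constant $A$, hence $P(\tau\le A\,T_0)\ge\tfrac12$; on this event $\int_0^\tau|\mathfrak r(Y_s)|\,ds\le\tfrac{2A}{\lambda\sqrt a}T_0$ and $\langle M\rangle_\tau=\int_0^\tau F'(Y_s)^2\,ds\le\tfrac{4\tau}{\lambda^2 a}\le\tfrac{4A}{\lambda^2 a}T_0$. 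Consequently $|\tau-T_0|\le|M_\tau|+\tfrac{2A}{\lambda\sqrt a}T_0$ there, and the lemma reduces to the small-ball bound
\[
P\!\left(|M_\tau|\le \tfrac{c_0}{\lambda\sqrt a}\,T_0\ \text{ and }\ \tau\le A\,T_0\right)\ \ge\ c\,\sqrt{K(-a)}
\]
for a small enough $c_0$ (so that $c_0+2A<5$): on that event $\lambda\tau=4K(-a)\cdot\tau/T_0$ lies in $[4K(-a)(1-\tfrac5{\lambda\sqrt a}),\,4K(-a)(1+\tfrac5{\lambda\sqrt a})]$. In the parameter range where $5/(\lambda\sqrt a)\ge1$ the target interval already contains $0$, so one only needs $P(\tau\le 2T_0)\ge c\sqrt{K(-a)}$, which is immediate since $\sqrt{K(-a)}\le\sqrt{K(-2)}$ is bounded.

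The main obstacle is this small-ball estimate, because $\tfrac{c_0}{\lambda\sqrt a}T_0$ is much smaller than a typical fluctuation $\sqrt{E\langle M\rangle_\tau}$ of $M_\tau$, so a second-moment/Markov bound (which only controls a much wider window) is too weak, and the Dambis--Dubins--Schwarz representation $M_\tau=W_{\langle M\rangle_\tau}$ by itself does not help either, the time change being a stopping time that could avoid $W$ being small. The way I would attack it is to split $[0,\tau]$ at the last exit of $Y$ from the ``bulk'' $\{|Y|\le L\}$ with $L\asymp\log a$ chosen so that the spatial width of the bulk matches, up to constants, the bulk portion of $T_0$: on $\{|Y|>L\}$ the drift is $\gtrsim\lambda\cosh Y$, so those two pieces follow the deterministic ODE within an exponentially small error and contribute a deterministic amount to $\tau$ and a negligible amount to $\langle M\rangle$; on the bulk the drift is comparable to the constant $\tfrac\lambda2\sqrt{1+a}\asymp\lambda\sqrt a$, so the bulk-crossing time is, up to a controlled distortion, the first-passage time across a level of size $\asymp L$ of a Brownian motion with constant drift $\asymp\lambda\sqrt a$. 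That first-passage time has an inverse-Gaussian-type density bounded below by a constant times the reciprocal of its standard deviation on an interval of relative width $\asymp(L\lambda\sqrt a)^{-1/2}$ about its mean; transporting this back through $F$ and re-assembling with the deterministic tails produces a lower bound of order $(\text{half-width of the window})/\mathrm{sd}(\lambda\tau)$, which one checks is of the asserted order $\sqrt{K(-a)}$ using \eqref{K} (for $K(-a)\asymp\tfrac{\log a}{\sqrt a}$) together with the size of $E\langle M\rangle_\tau$, computed from the identity $\int_{-\infty}^\infty(\cosh^2 z+a)^{-3/2}\,dz=-4\tfrac{d}{da}K(-a)\asymp\tfrac{\log a}{a^{3/2}}$ obtained by differentiating \eqref{Kind} and invoking \eqref{K} again. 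The genuinely delicate points are quantifying how the non-constant bulk drift $\tfrac\lambda2\sqrt{\cosh^2 y+a}$ distorts the first-passage density, patching the bulk and tail contributions together, and doing so while keeping control of the constant in front of $\sqrt{K(-a)}$.
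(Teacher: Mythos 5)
Your reduction is not a proof: after the (correct and appealing) change of variables $F$ and the identity $\tau=T_0-\int_0^\tau\mathfrak r(Y_s)\,ds-M_\tau$, the entire content of the lemma is concentrated in the small-ball bound $P\bigl(|M_\tau|\le \tfrac{c_0}{\lambda\sqrt a}T_0,\ \tau\le AT_0\bigr)\ge c\sqrt{K(-a)}$, and for that you offer only a program (last exit from the bulk, comparison with an inverse-Gaussian first-passage density, patching the tail pieces) whose ``genuinely delicate points'' you yourself flag and leave undone. This is not a routine verification: the target window is much narrower than the standard deviation of $M_\tau$, so one needs an anti-concentration/density lower bound for the explosion time of a diffusion with strongly non-constant drift, uniform in $a$ and $\lambda$ --- which is essentially the lemma itself. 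Moreover, your own numbers do not obviously produce the asserted order: using $K(-a)\asymp a^{-1/2}\log a$ from (\ref{K}) and your estimate $\int(\cosh^2z+a)^{-3/2}dz\asymp a^{-3/2}\log a$, the heuristic ratio (window half-width)$/$(standard deviation of $\lambda\tau$) comes out $\asymp\sqrt{K(-a)/\lambda}$, not $\sqrt{K(-a)}$; this matches the statement only when $\lambda$ is bounded above (true in the paper's application, but not in the range $\lambda\sqrt a\ge2$, $a>2$ for which you claim the check works), so the step ``which one checks is of the asserted order'' needs to be either corrected or accompanied by a tracking of the $\lambda$-dependence. Two smaller soft spots: the absorption argument for $E\tau\le A\,T_0$ uses $\int_0^\tau|\mathfrak r(Y_s)|ds\le\tfrac{2}{\lambda\sqrt a}\tau$, and the factor $1-\tfrac{2}{\lambda\sqrt a}$ can vanish at the boundary $\lambda\sqrt a=2$; and in the regime $\lambda\sqrt a\le5$ the bound $P(\tau\le 2T_0)\ge c$ is not ``immediate'' from $E\tau\le AT_0$ unless $A<2$, so even that easy case needs an argument (a variance bound or a median estimate).

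For comparison, the paper avoids first-passage densities entirely. It sandwiches $Y$ between diffusions $Y_\pm$ in which the $\tfrac12\tanh$ term of (\ref{sde_Y}) is replaced by the constants $\pm\tfrac12$, removes the noise by the deterministic shifts $Z_-=Y_-+t/2-B_t$ and $Z_+=Y_+-t/2-B_t$ (which solve pathwise ODEs), and then shows, by explicit integral estimates based on the identity (\ref{Kind}), that on the single event $H=\{\sup_{t\le\theta}|B_t|\le\eps_2\}$ with $\eps_2\asymp K(-a)/\lambda$ the explosion time of $Y$ is deterministically pinned inside the target window; the probability of $H$ is then bounded below by Brownian scaling together with the elementary estimate $P(\sup_{t\le1}|B_t|\le\delta)\ge c\,\delta$ for bounded $\delta$. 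Conditioning on the driving Brownian motion being uniformly small over the whole crossing window is precisely the device that sidesteps the density distortion and patching issues at which your sketch stalls; if you wish to complete your route instead, you must actually prove the bulk first-passage density lower bound with the non-constant drift quantified, reattach the tail contributions, and verify what power of $\lambda$ survives in the constant.
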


Now we are ready to prove the lower bounds in Proposition \ref{prop:taubnd}.

\begin{proof}[Proof of the  lower bounds in Proposition \ref{prop:taubnd}]
We  use (\ref{CMG1}) and (\ref{CMG2}) with an $a>0$   to get
\begin{align*}
P\left(\lambda \tilde \tau_\lambda\le s_2\right)\ge P\left(\lambda \tilde \tau_\lambda \in \left[s_1,s_2\right]\right)&=E\left[ \ind(\lambda \tau_{Y,\lambda,a}\in [s_1,s_2]) e^{-G_{\tau_{Y,\lambda,a}}(Y)}  \right] \\& \ge E\left[ \ind(\lambda \tau_{Y,\lambda,a}\in [s_1,s_2]) e^{\frac{\lambda^2a\tau_{Y,\lambda,a}}{8}-\lambda ((1+a)K(-a)-E(-a)) -\frac{\lambda \sqrt{a}}{4} \tau_{Y,\lambda,a}}  \right].
\end{align*}
Assuming $\lambda \sqrt{a}\ge 2$ we see that the exponential term under the last expectation is nondecreasing in $\tau_{Y,\lambda, a}$ and because of the indicator function we get the bound
\begin{align*}
P\left(\lambda \tilde \tau_\lambda \le s_2\right)&\ge 
P(\lambda \tau_{Y,\lambda,a}\in [s_1,s_2]) e^{\frac{\lambda a s_1}{8}-\lambda ((1+a)K(-a)-E(-a)) -\frac{ \sqrt{a}}{4} s_1 }. 
\end{align*} 
Now choose $[s_1,s_2]=\left[4K(-a)(1- \tfrac{5}{\lambda \sqrt{a}}), 4 K(-a)(1+\tfrac{5}{\lambda \sqrt{a}})\right]$, and assume $a>2$.
With Lemma \ref{lemma:lowerY} we can estimate the probability on the right and  we can use the asymptotics (\ref{K}) of Proposition \ref{prop:asym} to estimate the exponential term. 
Collecting all the terms gives 
\begin{align}\label{bound123}
P\left(\lambda \tilde \tau_{\lambda}\le 4 K(-a)(1+\tfrac{5}{\lambda \sqrt{a}})\right)\ge \exp \Big[ - \frac{\lambda a}{2} K(-a) -C(1+\lambda) \log(a)  \Big]
\end{align}
with an absolute constant $C$.

Now assume that $ \lambda_0>0$ is fixed, choose a small $\hat \eps>0$ and assume that $0<\lambda\le \lambda_0$, $\lambda  T\le \hat \eps$  and $ T\le \hat \eps \log(\tfrac{1}{\lambda  T})$. Set $a=-K^{(-1)}(\frac{\lambda T}{4})$. If $\hat \eps$ is chosen small enough then using the bounds of Proposition \ref{prop:asym} one can readily show that  
\begin{align}
\frac12 \frac{\log(\tfrac{1}{\lambda  T})}{\lambda  T}\le  \sqrt{a}\le 2 \frac{\log(\tfrac{1}{\lambda  T})}{\lambda  T}
\end{align}
which yields both $\lambda \sqrt{a}\ge 2$ and $a> 2$ (using $\lambda\le \lambda_0$) for small enough $\hat \eps$. Thus we can use (\ref{bound123}) to get 
\begin{align}\notag
\log P \left( \tilde \tau_ \lambda  \leq  T \left(1+ \tfrac{10  T}{\log (1/(\lambda  T)}\right)\right)  &\geq - \frac{\lambda a}{2}\cdot \frac{\lambda  T}{4} - C(1+\lambda) \log a\\ \notag
&  \geq - \frac{2}{ T}\WW^2\left(- \tfrac{\lambda  T}{16}\right) - C \log(\tfrac{1}{\lambda  T})\\&\geq - \frac{2}{ T}\WW^2\left(- \lambda  T\right) - C \log(\tfrac{1}{\lambda  T})\label{bound234}
\end{align}
where we again used Proposition \ref{prop:asym} and the constant $C$ can change from line to line (but only depends on $\lambda_0$ and $\hat \eps$). Now set $t= T \left(1+ \tfrac{10  T}{\log (1/(\lambda  T)}\right)$, we can see that $T\le t \le T(1+10 \hat \eps)$. The  bound (\ref{bound234})  gives
\begin{align*}
\log P \left( \tilde \tau_ \lambda  \leq  t \right) & \geq -\frac{2}{t}\WW^2(-\lambda T)(1+ \tfrac{10  T}{\log (1/(\lambda  T))})- C \log(\tfrac{1}{\lambda  T})\\
&=-\frac{2}{t}\WW^2(-\lambda T)-\frac{20T}{t}\WW^2(-\lambda T) \frac{1}{\log (1/(\lambda  T))}-C \log(\tfrac{1}{\lambda  T}).
\end{align*}
Using the second bound of (\ref{W}) (which is allowed since $\lambda T\le \hat \eps$) together with $1\le \frac{t}{T} \le 1+10 \hat \eps$ we get that 
\[
\frac{20T}{t}\WW^2(-\lambda T) \frac{1}{\log (1/(\lambda  T))}+C \log(\tfrac{1}{\lambda  T})\le C' \log(\tfrac{1}{\lambda  t}).
\]
Using the first and then the second bound of (\ref{W})  we get
\[
-\frac{2}{t}\WW^2(-\lambda T)\ge -\frac{2}{t}(\WW^2(-\lambda t)+ C\log \left|\tfrac{T}{t}\right| \cdot \WW(-\lambda t))\ge -\frac{2}{t}\WW^2(-\lambda t)-\frac{C'}{t} \log(\tfrac{1}{\lambda  t}). 
\]
Putting everything together gives 
\[
\log P \left( \tilde \tau_ \lambda  \leq  t \right) \ge  -\frac{2}{t}\WW^2(-\lambda t)-c\left(1+\frac{1}{t}\right) \log(\tfrac{1}{\lambda  t})
\]
with a constant $c$. The only thing left to show is that for small enough $\eps'>0$  if $\lambda t\le \eps'$, $t\le \eps' \log(1/(\lambda t))$ then we can find $T$ with $t= T (1+ \frac{10  T}{\log (1/(\lambda  T))})$which satisfies $\lambda  T\le \hat \eps$  and $ T\le \hat \eps \log(\tfrac{1}{\lambda  T})$. This is fairly straightforward, one can check that if $\eps'$ is small enough then we can find a $T$ with $(1-10\eps')t\le T\le t$ satisfying all the listed conditions.

Finally, to prove (\ref{tauLB}) for $\tau_\lambda$ we use the comparison (\ref{coup2}). 
Note that if we have the bound $\lambda\le \lambda_0$ then $\lambda''=\frac{\beta}{4} e^{-\frac{\beta}{4}t} \lambda\le \lambda_0''=\frac{\beta}{4}\lambda_0$. By choosing a small enough $\eps$ in the bounds $\lambda t\le \eps$ and $t\le \eps \log(\tfrac{1}{\lambda  t})$ we can use the just proved lower bound for $\tilde \tau_{\lambda''}$ to give
\begin{align*}
\log P(\tau_\lambda\le t)\ge -\frac{2}{t} \WW^2(-\lambda \tfrac{\beta}{4}e^{-\frac{\beta}{4}t} t)-c\left(1+\frac{1}{t}\right)\left(\log(\tfrac{1}{\lambda t})+\frac{\beta}{4}t+\log\frac{4}{\beta}\right).
\end{align*}
The fact that the right hand side is bounded from below by $-\frac{2}{t}\WW^2(-\lambda t)-\frac{c'}{t} \log(\tfrac{1}{\lambda  t})$ with a new constant $c'$ follows along the line of the already seen arguments using Proposition \ref{prop:asym} and our assumptions on $\lambda, t$. 
\end{proof}
Now we return to the proof of Lemma \ref{lemma:lowerY}.

\begin{proof}[Proof of Lemma \ref{lemma:lowerY}]
Our main tool is again a coupling argument. We introduce the diffusions
\begin{align*}
d Y_-&=\frac{\lambda}{2}\sqrt{\cosh^2 Y_-+a}\ dt-\frac12  dt +dB\\
d Y_+& = \frac{\lambda}{2}\sqrt{\cosh^2  Y_+ +a}\ dt+\frac12  dt +dB
\end{align*}
driven by the same Brownian motion as $Y$. Comparing the drifts of $Y_{-}, Y_{+}$ to that of $Y$ we see  that $Y_{-}\le Y$ and $Y\le Y_{+}$ until the blowup of the larger diffusions. 

We also introduce $Z_-=Y_-+t/2-B_t$ and $Z_+= Y_+ - t/2 - B_t$ which satisfy the differential equations
\[
Z_-'=\frac{\lambda}{2}\sqrt{\cosh^2(Z_- -t/2+B_t)+a}>0, \quad \text{ and } \quad Z_+'=\frac{\lambda}{2}\sqrt{\cosh^2(Z_+ +t/2+B_t)+a}>0
\]
until their blowup times.  Note that $Z_-$ blows up exactly when $Y_-$ blows up, and $Z_+$ blows up when $Y_+$ does.   Thus  $Z_+$ will blow up before  $Y$ does and $Z_-$ will blow up after.

We introduce the constants 
\[
\eps_1=\frac{10 K(-a)}{\lambda \sqrt{a}}, \qquad \eps_2=\frac{K(-a),}{8\lambda}  \qquad \theta=\frac{4K(-a)+\eps_1}{\lambda},
\]
and we record for further use that since  $\lambda \sqrt{a}\ge 2$ we have
\begin{align}\label{111}
\eps_1>\frac{4\eps_2+\theta}{ \sqrt{a}}.
\end{align}
We will show that on the event $H=\{\sup_{t\le \theta} |B_t|\le \eps_2\}$ the process $Z_-$ will blow up before $\theta$ and the process $Z_+$ will blow up after $\lambda^{-1}(4K(-a)-2\eps_1)$. The final bound will come from an estimate on the probability of the event $H$. 

Note that  $Z'_{-}>\frac{2\sqrt{a}}{\lambda}>0$ until its blowup, in particular $Z_{-}$ is strictly increasing. Moreover, on the event $H$ for $t\le \theta$ we have
\[
Z_{-}(t)-\theta/2-\eps_2\le Z_{-}(t)-t/2+B_t \le Z_{-}(t)+\eps_2.
\]
Let $\eta_1$ be the  hitting time of $-\eps_2$ by $Z_{-}$. Then for $t\le \eta_1 \wedge \theta$ on the event $H$ we have
\[
Z_{-}'(t)=\frac{\lambda}{2}\sqrt{\cosh^2(Z_{-}(t)-t/2+B_t)+a}\ge \frac{\lambda}{2}\sqrt{\cosh^2(Z_{-}(t)+\eps_2)+a}.
\]
Rearranging, integrating and using  identity (\ref{Kind}):
\[
\frac{\lambda}{2}(\eta_1 \wedge \theta) \le \int_0^{\eta_1\wedge \theta} \frac{Z_-'(t)}{\sqrt{\cosh^2(Z_-(t)+\eps_2)+a}}dt\le\int_{-\infty}^{-\eps_2} \frac{1}{\sqrt{\cosh^2(z+\eps_2)+a}}dz=K(-a),
\]
which gives
\[
\lambda (\eta_1\wedge \theta)\le 2K(-a)\leq \lambda \theta, \qquad \textup{and} \qquad \lambda \eta_1 \leq 2K(-a).
\]
Next we bound the travel time of $Z_-$ from $-\eps_2$ to $\theta/2+\eps_2$ which we denote by  $\eta_2$. Since $Z_-'\ge \frac{\lambda}{2}\sqrt{a}$ we have the bound
\[
\lambda \eta_2\le \frac{4\eps_2+\theta}{ \sqrt{a}}\le \eps_1. 
\]
Note that we have $\eta_1+\eta_2\le \lambda^{-1}(2K(-a)+ \eps_1)<\theta$ on $H$. 
Consider now the travel time $\eta_3$ the  from $\theta/2+\eps_2$ to $\infty$.  Denote by $\hat \eta_3$ the minimum of $\eta_3$ and $\theta-\eta_1-\eta_2>0$.
On the event $H$ if $Z_- \geq \theta/2 + \eps_2$, $\eta_1+ \eta_2 \leq  t \leq \theta$ then we have the lower bound
\[
Z_-'(t)=\frac{\lambda}{2}\sqrt{\cosh^2(Z_-(t)-t/2+B_t)+a}\ge \frac{\lambda}{2}\sqrt{\cosh^2(Z_-(t)- \theta/2-\eps_2)+a}.
\]
Rearranging the inequality and integrating from $\eta_1+ \eta_2$ to $\eta_1+ \eta_2+ \hat \eta_3$ we get 
\[
\lambda \hat\eta_3\le 2 K(-a).
\]
Collecting our estimates we see that on the event $H$ we have
\[
\lambda ( \eta_1+\eta_2 + \hat \eta_3 )\le 4 K(-a)+\eps_1=\lambda \theta.
\]
This immediately implies $\hat \eta_3=\eta_3$ and since $\eta_1+\eta_2+\eta_3$ is the blowup time of $Y_{-}$ we also get that $ \tau_{Y,\lambda, a} \le \theta$ on $H$.

To show that the blow up time of $Z_+$ is greater than $\lambda^{-1}(4K(-a)-2\eps_1)$ on the event $H$ we carry out a similar calculation.  The process $Z_+$ is strictly increasing and on the event $H$ for $t\le \theta$ we have
\[
Z_{+}(t)-\eps_2\le Z_{+}(t)+t/2+B_t \le Z_{-}(t)+\theta/2+\eps_2.
\]
Let $\zeta_1$ be the hitting time of $-\theta/2-\eps_2$ for process $Z_+$. Since the blowup time of $Z_+$ is smaller than that of $Z_-$ we get that $\zeta_1<\theta$ on the event $H$.  For $Z_+ < -\theta/2-\eps_2$ on $H$ we get
\[
Z_+'(t)=\frac{\lambda}{2}\sqrt{\cosh^2(Z_+ +t/2+B_t)+a} \leq \frac{\lambda}{2}\sqrt{\cosh^2(Z_+(t)- \eps_2)+a},
\]
which gives us 
\begin{align*}
\frac{\lambda}{2}\zeta_1  &\geq \int_0^{\zeta_1} \frac{Z'_+(t)}{\sqrt{\cosh^2(Z_+(t)- \eps_2)+a}} dt  =K(-a)- \int_{-\theta/2-2\eps_2}^{0}\frac{1}{\sqrt{\cosh^2 (z)+a}}dz\\& \geq K(-a) - \frac{\theta+4\eps_2}{2\sqrt{a}}\geq K(-a)-\eps_1/2.
\end{align*}
In the last step we used (\ref{111}). 
Now take $\zeta_3$ to be the time spent by $Z_+$ traveling from $\eps_2$ to $\infty$.  Again note that the blow up time of $Z_+$ is bounded by that of $Z_-$ which is bounded by $\theta$ on $H$.  Therefore on $H$ for $\eps_2\le Z_+$ we have 
\[
Z_+'(t)=\frac{\lambda}{2}\sqrt{\cosh^2(Z_+(t)+t/2+B_t)+a}\le \frac{\lambda}{2}\sqrt{\cosh^2(Z_+(t)+ \theta/2+\eps_2)+a}.
\]
This yields
\[
\frac{\lambda}{2} \zeta_3 \geq  K(-a) - \int_{0}^{\theta/2+2\eps_2} \frac{1}{\sqrt{\cosh^2(z)+a}}dz \geq K(-a) - \eps_1/2.
\]
These estimates give
\[
\lambda( \zeta_1+ \zeta_3) \geq 4K(-a)-2\eps_1.
\]
Our last bound shows that on $H$ the process $Y$ blows up after $\lambda^{-1}(4K(-a)-2\eps_1)$. 
Putting everything together yields
\[
P\left(\lambda \tau_{Y,\lambda,a}\in \left[4K(-a)(1- \tfrac{5}{\lambda \sqrt{a}}), 4 K(-a)(1+\tfrac{5}{\lambda \sqrt{a}})\right]\right)\ge P(H). 
\]
On the other hand we have
\[
P(H)=P(\sup_{t\le \lambda^{-1} \theta} |B_t|\le \eps_2)=P(\sup_{t\le 1}|B_t|\le \lambda^{1/2} \theta^{-1/2} \eps_2)
\]
and
\[
 \lambda^{1/2} \theta^{-1/2} \eps_2= \frac{K(-a)}{8 \sqrt{4K(-a) (1+\tfrac{10}{\lambda \sqrt{a}})}}\ge \frac{1}{40}\sqrt{K(-a)}.
\]
It is known that $P(\sup_{t\le 1}|B_t|\le \delta)\ge 2(2\Phi(\delta/4)-1)$ for $\delta>0$ (see e.g.~Proposition 2.8.10 in \cite{KarShr}) which can be bounded from below by $c \delta$ if we have an upper bound on $\delta$. Since  $a>2$ we have an upper bound on $ \frac{1}{40}\sqrt{K(-a)}$ which finishes the proof.
\end{proof}

\subsection{Proof of the upper bounds in Proposition \ref{prop:taubnd}}

The proof of the upper bound will rely on the following estimate on the moment generating function of $\tilde \tau_\lambda$.
\begin{proposition}[\cite{HV}]\label{prop:taumgf}
For any $a>-1$ we have
\begin{align}\label{CMG3}
E e^{-\frac{\lambda^2a}{8}\tilde \tau_{\lambda}-\frac{\lambda \sqrt{|a|}}{4}\tilde \tau_{\lambda}}\le e^{-\lambda  ((1+a)K(-a)-E(-a))}.
\end{align}
\end{proposition}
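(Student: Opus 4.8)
This is the estimate established in Section~3 of \cite{HV}; the plan is to evaluate the left-hand side through the Cameron--Martin--Girsanov identity (\ref{CMG1}) and then invoke the Girsanov bound (\ref{CMG2}), exploiting an exact cancellation of the $\tau_{Y,\lambda,a}$-dependent terms.

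First I would record that $\tilde\tau_\lambda$ and $\tau_{Y,\lambda,a}$ are almost surely finite: in both (\ref{sde_X}) and (\ref{sde_Y}) the drift is large and positive near $\pm\infty$ (using $\cosh\ge 1$), so a standard comparison argument shows the process explodes to $+\infty$ in finite time a.s. Next, the identity (\ref{CMG1}) is stated only for indicators of intervals; since such indicators form a multiplicative class generating the Borel $\sigma$-algebra, a monotone class argument (together with $E[e^{-G_{\tau_{Y,\lambda,a}}(Y)}]=1$, which is the case $[s_1,s_2]=[0,\infty)$) extends it to
\[
E\!\left[f(\tilde\tau_\lambda)\right]=E\!\left[f(\tau_{Y,\lambda,a})\,e^{-G_{\tau_{Y,\lambda,a}}(Y)}\right]
\]
for every bounded measurable $f\ge 0$. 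Applying this with the truncations $f_M(s)=\exp[-(\tfrac{\lambda^2 a}{8}+\tfrac{\lambda\sqrt{|a|}}{4})(s\wedge M)]$ and letting $M\to\infty$ by monotone convergence, it extends further to the (possibly unbounded) function $f(s)=\exp[-(\tfrac{\lambda^2 a}{8}+\tfrac{\lambda\sqrt{|a|}}{4})s]$ appearing in the statement.

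Given this, I would write
\[
E\!\left[e^{-\frac{\lambda^2 a}{8}\tilde\tau_\lambda-\frac{\lambda\sqrt{|a|}}{4}\tilde\tau_\lambda}\right]=E\!\left[e^{-\frac{\lambda^2 a}{8}\tau_{Y,\lambda,a}-\frac{\lambda\sqrt{|a|}}{4}\tau_{Y,\lambda,a}}\,e^{-G_{\tau_{Y,\lambda,a}}(Y)}\right],
\]
and rearrange (\ref{CMG2}) to the pointwise lower bound
\[
-G_{\tau_{Y,\lambda,a}}(Y)\le \frac{\lambda^2 a}{8}\tau_{Y,\lambda,a}+\frac{\lambda\sqrt{|a|}}{4}\tau_{Y,\lambda,a}-\lambda\big((1+a)K(-a)-E(-a)\big).
\]
Substituting this into the integrand, the two $\tfrac{\lambda^2 a}{8}\tau_{Y,\lambda,a}$ terms cancel and the two $\tfrac{\lambda\sqrt{|a|}}{4}\tau_{Y,\lambda,a}$ terms cancel, leaving the integrand bounded by $e^{-\lambda((1+a)K(-a)-E(-a))}$; taking expectations gives the claim.

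The step I expect to require the most care is the extension of (\ref{CMG1}) to the exponential functional when $-1<a<0$: there $e^{-\frac{\lambda^2 a}{8}\tilde\tau_\lambda}$ grows exponentially in $\tilde\tau_\lambda$, so finiteness of the left-hand side is not obvious a priori. The truncation argument resolves this, since for each $M$ the quantity $E[f_M(\tau_{Y,\lambda,a})e^{-G_{\tau_{Y,\lambda,a}}(Y)}]$ is bounded by $e^{-\lambda((1+a)K(-a)-E(-a))}$ uniformly in $M$, so the monotone limit is finite and obeys the same bound. Everything else is bookkeeping with (\ref{CMG1}) and (\ref{CMG2}); no estimates on the diffusions beyond those are needed.
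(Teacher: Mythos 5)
Your argument is correct, but it is worth noting that the paper does not actually prove Proposition \ref{prop:taumgf} at all: the "proof" in the text is the single remark that this is Proposition 8 of \cite{HV} (with $-a$ in place of $a$), so you are supplying a derivation that the paper outsources. Your route --- extend the identity (\ref{CMG1}) from interval indicators to general nonnegative functionals of the hitting time, then insert the pointwise bound $-G_{\tau_{Y,\lambda,a}}(Y)\le \frac{\lambda^2 a}{8}\tau_{Y,\lambda,a}+\frac{\lambda\sqrt{|a|}}{4}\tau_{Y,\lambda,a}-\lambda((1+a)K(-a)-E(-a))$ from (\ref{CMG2}) and observe the exact cancellation of the $\tau_{Y,\lambda,a}$-dependent terms --- is sound, and it is essentially the same Cameron--Martin--Girsanov mechanism by which the bound is obtained in \cite{HV}, so it is a faithful reconstruction rather than a genuinely new method. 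You also correctly identify and handle the two points that need care: the almost sure finiteness of $\tilde\tau_\lambda$ and $\tau_{Y,\lambda,a}$ (needed both for the monotone class argument, since equality of total masses, i.e.\ $E[e^{-G_{\tau_{Y,\lambda,a}}(Y)}]=1$, is what makes the interval identity propagate, and for interpreting the left side of (\ref{CMG3}) when $-1<a<0$), and the truncation/monotone convergence step that passes from bounded $f_M$ to the possibly unbounded exponential functional, with the uniform-in-$M$ bound coming from the same cancellation. One cosmetic remark: when $\frac{\lambda^2 a}{8}+\frac{\lambda\sqrt{|a|}}{4}>0$ the truncated functions decrease rather than increase in $M$, so the limit is by bounded (not monotone) convergence on the left side; this does not affect the conclusion. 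The net effect of your write-up is a self-contained proof of (\ref{CMG3}) from the two facts (\ref{CMG1})--(\ref{CMG2}) that the paper already quotes, which is a reasonable alternative to the bare citation.
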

This is Proposition 8 in \cite{HV}, note that in that statement $-a$ is used in place of $a$. 
\begin{proof}[Proof of the upper bound in Proposition \ref{prop:taubnd}]
By  Markov's inequality we have 
\[
P(\tilde\tau_\lambda \leq t) \leq \big( E e^{A\tilde \tau_\lambda}\big)e^{-At}\quad \textup{ for $A<0$}.
\]  We will choose $A = -\frac{\lambda^2 a}{8} - \frac{\lambda \sqrt{a}}{4}$ with $a= -K^{(-1)}(t\lambda/4)\ge -K^{(-1)}( \eps/4)>0$. Using Proposition \ref{prop:taumgf} we get
\[
Ee^{A\tilde \tau_\lambda}\le e^{-\lambda ((1+a)K(-a)-E(-a))}.
\]
This gives the upper bound
\begin{align*}
P(\tilde \tau_\lambda \le t) &\leq e^{-\lambda ((1+a)K(-a)-E(-a)) + \frac{\lambda^2 a}{8}t+ \frac{\lambda \sqrt{a}}{4} t}
\leq e^{K^{(-1)}(t\lambda/4) \frac{\lambda^2 t}{8}+\lambda E(-a)+\frac{\lambda \sqrt{a}t}{4}},
\end{align*}
where we used our specific choice of $a$. If $ \eps$ is small enough we will have $a\ge -K^{(-1)}( \eps/4)>2$ which allows us to use the asymptotics of Lemma \ref{prop:asym}. This gives the bound
$
a\le \frac{C}{\lambda^2 t^2}\log^2(\frac{1}{\lambda t})
$
with a positive constant $C$, and also
\begin{align*}
\frac{K^{(-1)}(t\lambda/4) \lambda^2 t}{8}+\lambda E(-a)+\frac{\lambda \sqrt{a}t}{4}\le - \frac{2}{t} \WW(-\lambda t/4)^2+\frac{C}{t}+C(1+\frac{1}{t}) \log(\tfrac{1}{\lambda t})
\end{align*}
(with a possibly different $C$). Using the bounds in (\ref{W}) and changing the value of $C$ again we get 
\[
\frac{K^{(-1)}(t\lambda/4) \lambda^2 t}{8}+\lambda E(-a)+\frac{\lambda \sqrt{a}t}{4}\le- \frac{2}{t} \WW(-\lambda t)^2+C(1+\frac{1}{t}) \log(\tfrac{1}{\lambda t}),
\]
which finishes the proof for $\tilde \tau_\lambda$.

To get the similar statement for $\tau_\lambda$ we use (\ref{coup1}) to obtain
\begin{align}
P(\tau_\lambda\le t)\le P(\tilde \tau_{\frac{\beta}{4}\lambda}\le t)\le  \frac{2 }{ t}\WW^2 \left(-\frac{\lambda\beta}{4} t\right)  + c(1+ \frac{1}{t})\log \left( \tfrac{1}{\frac{\lambda \beta}{4} t}\right).\label{last}
\end{align}
Note that we are allowed to use the just proved bound on $\tilde \tau$ if we choose the upper bound $\eps$ on  $\lambda t$ small enough. To complete the proof we use (\ref{W}) to replace  $\frac{\lambda \beta}{4}$  with $\lambda$ everywhere by choosing $\eps$ small enough and modifying the constant in front of the last term of (\ref{last}).  
\end{proof}

\section{Proof of the lower bound in Theorem \ref{thm:overcrowding}}

Our proof will have two main steps. In the first step we will give a reasonably good (but not optimal) lower bound on $P(N_\beta(\lambda)\ge n)$. In the second step we will show how one can get better and better bounds as we increase $n$.

\begin{proof}[Proof of the lower bound in Theorem \ref{thm:overcrowding}]
We first give a direct estimate on $P(N_\beta(\lambda)\ge n)$ for $\lambda\le \lambda_0$. Although this lower bound will be far from the statement of the theorem, we will later show how to `bootstrap' this into a near optimal bound for large $n$.

Recall Theorem \ref{thm:sine} and Proposition \ref{prop:prop}. The connection between the counting function $N_\beta(\lambda)$ and the diffusion $\alpha_\lambda(t)$ shows that for any $T>0$ we have
\[
P(N_\beta(\lambda)\ge n)\ge P(\alpha_\lambda(T)\ge 2n\pi). 
\]
The drift in (\ref{sinesde}) on $[0,T]$ is at least $\lambda'=\frac{\beta}{4}e^{-\frac{\beta}{4}T} \lambda$ so coupling $\alpha_\lambda$ and $\tilde \alpha_{ \lambda'}$ (using the same driving noise) we get
\[
P(\alpha_\lambda(T)\ge 2n\pi)\ge P(\tilde \alpha_{\lambda'}(T)\ge 2n\pi).
\]
For the process $\tilde \alpha_{\lambda'}$ denote the hitting time of $2\pi k$ by $\tilde \tau_{\lambda',k}$ (note that $\tilde \tau_{\lambda', 1}=\tilde \tau_{\lambda'}$ using our previous notation). Using the same ideas needed for Proposition \ref{prop:prop} it is easy to show that $\tilde \tau_{\lambda',1}, \tilde \tau_{\lambda', 2}-\tilde \tau_{\lambda', 1}, \dots, \tilde \tau_{\lambda',k}-\tilde \tau_{\lambda',k-1}, \dots$ are i.i.d.~random variables. Using this we can obtain the estimate
\begin{align*}
P(\tilde \alpha_{\lambda'}(T)\ge 2n\pi)&\ge P(\tilde \tau_{\lambda',1}\le \tfrac{T}{n}, \textup{ and } \tilde \tau_{\lambda',k}-\tilde \tau_{\lambda',k-1}\le \tfrac{T}{n} \textup{ for } 2\le k\le n)\\
&=P(\tilde \tau_{\lambda'}\le \tfrac{T}{n})^n
\end{align*}
We will set $T=\frac{4}{\beta}\log(\frac{n}{\lambda})$ and apply the lower bound (\ref{tauLB}) of Proposition \ref{prop:taubnd} with $t=\frac{T}{n}$ and $\lambda'$. 
(Note that  $\lambda'\le \frac{\beta}{4}\lambda_0$.)
We first have to check that the conditions of the proposition are satisfied. From the definitions we get
\[
\lambda' t=\left(\frac{\lambda}{n}\right)^2 \log(\tfrac{n}{\lambda}), \qquad \frac{t}{\log(\frac{1}{\lambda' t})}=\frac{1}{ n}\cdot \frac{4 \log(\frac{n}{\lambda})}{2\beta \log(\frac{n}{\lambda})-\beta \log\log(\frac{n}{\lambda})}.
\]
Since we have $\lambda'\le \frac{\beta}{4}\lambda_0$, there is a positive integer $n_0$ (depending only on $\lambda_0$,  $\beta$) so that if $n\ge n_0$ then both of these expressions are bounded by $\eps$. (Here $\eps$ is the constant from the statement of Proposition \ref{prop:taubnd} which also only depends on $\lambda_0, \beta$.)
Thus if $n$ is large enough we can use Proposition \ref{prop:taubnd} to get
\begin{align*}
P(\tilde \tau_{\lambda'}\le \tfrac{T}{n})&\ge \exp\left[-\frac{2}{t}\WW^2(-\lambda' t)-c\left(1+\frac{1}{t}\right)\log(\tfrac{1}{\lambda' t})    \right]\\
&=\exp\left[-\frac{\beta n}{2 \log(\frac{n}{\lambda})} \WW^2\left(\frac{\lambda^2}{n^2} \log(\tfrac{n}{\lambda})\right)-c\left(1+\frac{\beta n}{4 \log(\frac{n}{\lambda})} \log\left(\frac{n^2}{\lambda^2}\cdot \frac1{\log(\frac{n}{\lambda})}\right)\right)  \right].
\end{align*}
Choosing $n$ large enough we can use the bounds (\ref{W}) of Proposition \ref{prop:asym} to get 
\begin{align*}
P(\tilde \tau_{\lambda'}\le \tfrac{T}{n})\ge \exp\left[-c_1 n \log(\tfrac{n}{\lambda})-c_2 n\right]
\end{align*}
with positive constants $c_1, c_2$ (which only depend on $\lambda_0, \beta$). Putting our estimates together we obtain the lower bound
\begin{align}\label{stupidbnd}
P(N_\beta(\lambda)\ge n)\ge \exp\left[-c_1 n^2 \log(\tfrac{n}{\lambda})-c_2 n^2\right],
\end{align}
which holds for $n\ge n_0$ with $n_0$ only depending on $\lambda_0, \beta$. Finally, since $P(N_\beta(\lambda)\ge k)$ is decreasing in $k$, we can adjust the constants $c_1, c_2$ so that (\ref{stupidbnd}) remains valid for all $n\ge 1$.

To finish the proof we will show how to improve this lower bound to get the lower bound in Theorem \ref{thm:overcrowding}. From the recursion equation of Lemma \ref{lem:rec} we get that  for any $T>0$ and $n>1$
\begin{align}\notag
P(N_\beta(\lambda)\ge n)&\ge E\left[\ind(\tau_\lambda\le T) g(\tau_\lambda)   \right]\\&\ge P(\tau_\lambda\le T) g(T)= P(\tau_\lambda\le T) P(N_\beta(\lambda e^{-\frac{\beta}{4} T})\ge n-1),\label{lwr1}
\end{align}
where we used the fact that the $g(t)$ function is non increasing. 
Set $T= \frac{4 \log(\frac{n}{\lambda})}{\beta n}$. Using the same arguments as in the first part of this proof we can check that there is a positive integer $n_0$ (depending only on $\lambda_0, \beta$) so that for $n\ge n_0$ we can use the lower bound of Proposition \ref{prop:taubnd} with $t=T$. This gives
\begin{align*}
\log P(\tau_\lambda\le T) \ge -\frac{\beta n}{2 \log(\frac{n}{\lambda})} \WW^2\left(-\frac{4}{\beta}\cdot \frac{\lambda}{n}\log(\tfrac{n}{\lambda})\right)-c(1+\frac{\beta n}{4 \log(\frac{n}{\lambda})} )\log\left(\frac{\beta n}{4\lambda} \cdot \frac{1} {\log(\frac{n}{\lambda})}\right).
\end{align*}
Assuming that $n$ is large enough we can use the  bounds in (\ref{W}) to remove the constant $\frac{4}{\beta}$ from inside $\WW$. This allows us to find positive constants $c_1$ and $c_2$ so that
\begin{align}\notag
\log P(\tau_\lambda\le T)& \ge -\frac{\beta n}{2 \log(\frac{n}{\lambda})} \WW^2\left(-\frac{\lambda}{n}\log(\tfrac{n}{\lambda})\right)-c_1 \log(\tfrac{n}{\lambda})-c_2 n\\
&=-\frac{\beta}{2}n \log(\tfrac{n}{\lambda})-c_1\log(\tfrac{n}{\lambda})-c_2 n\label{lwr2}
\end{align}
where we used $\WW(x \log x)=\log x$ which holds for $0<x<1/e$.

Now assume that for a certain $n$ there is a constant $f_{n-1}>0$ so that  we have the lower bound 
\[
P(N_\beta(\lambda)\ge n-1)\ge  \exp\left[-f_{n-1}  \log(\frac{n-1}{\lambda})-c (n-1)^2\right]
\]
for all $\lambda\le \lambda_0$ with a fixed constant $c$. Then
\begin{align*}
\log P(N_\beta(\lambda e^{-\frac{\beta}{4} T})\ge n-1)&\ge -f_{n-1} \log(\tfrac{n-1}{\lambda e^{-\frac{\beta}{4} T}})-c(n-1)^2\\
&=-f_{n-1}\log(\tfrac{n}{\lambda})-f_{n-1}\log(\frac{n-1}{n})-\frac{\beta}{4}T f_{n-1}-c(n-1)^2\\
&\ge -\frac{n+1}{n} f_{n-1}\log(\frac{n-1}{n})-c(n-1)^2. 
\end{align*}
Together with (\ref{lwr1}) and (\ref{lwr2}) this yields the lower bound
\[
\log P(N_\beta(\lambda)\ge n) \ge -\left(\frac{n+1}{n}f_{n-1}+\frac{\beta}{2}n+c_1\right)\log(\tfrac{n}{\lambda})-c_2n-c(n-1)^2.
\]
If we also assume that $c>c_2$ then we can further simplify this to 
\[
\log P(N_\beta(\lambda)\ge n) \ge -\left(\frac{n+1}{n}f_{n-1}+\frac{\beta}{2}n+c_1\right)\log(\tfrac{n}{\lambda})-cn^2.
\]
Thus we proved the following statement. Suppose that $n_0$ is large enough,  $c>c_2$  and $f$ solves the following recursion:
\begin{align}\label{reclwr}
f_{n}=\frac{n+1}{n}f_{n-1}+\frac{\beta}{2}n+c_1, \qquad \textup{for $n> n_0$}.
\end{align}
Then if
\[
\log P(N_\beta(\lambda)\ge n) \ge -f_{n} \log(\tfrac{n}{\lambda})-c n^2
\]
holds for $n=n_0$ then it also holds for any $n>n_0$. Because of (\ref{stupidbnd}), we can choose $f_{n_0}$ so that the previous inequality holds for $n_0$ (with a suitable $c$). To finish the proof, we need to show that the recursion (\ref{reclwr}) will have the appropriate asymptotics. The general solution of the recursion (without the initial condition) is 
\[
f_{n}=\frac{\beta}{2}n(n+1)+(c_1-\frac{\beta}{2})\sum_{k=1}^{n+1} \frac{n+1}{k}+C (n+1).
\]
From this it is clear that there is a solution with any initial condition, and the solution satisfies
\[
f_{n}\le \frac{\beta}{2}n^2+b\, n \log (n+1)
\]
with a finite $b$. From this the lower bound of Theorem \ref{thm:overcrowding} follows for $n\ge n_0$ and $\lambda\le \lambda_0$. Using the argument following  (\ref{stupidbnd}) we can now modify the constant $b$ so that the lower bound holds for all $n$ which completes the proof.
\end{proof}

\section{Proof of the upper bound in Theorem \ref{thm:overcrowding}}

The upper bound will use a two step strategy similar to that of the lower bound, but our recursive step will be a bit more involved.

\begin{proof}[Proof of the upper bound in Theorem \ref{thm:overcrowding}]

Recall from Proposition \ref{prop:trivbnd} that 
\[
P( N_\beta(\lambda) \geq n )\le \left(\frac{\lambda}{2\pi}\right)^n\le \exp(-n \log(1/\lambda)).
\]
By choosing  $c_1>1$ we get $-n \log(1/\lambda)\le -n \log(n/\lambda)+c_1 n^2$ which yields
\begin{align}\label{trivup}
\log P( N_\beta(\lambda) \geq n )\le -n \log(n/\lambda)+c_1 n^2.
\end{align}

Now assume that $n\ge 2$ and there are positive constants $f_{n-1}\le \frac{\beta}{2}(n-1)^2$, $c_1$ so that for all $0\le \lambda\le \lambda_0$ we have
\begin{align}\label{ind}
 P(N_\beta(\lambda)\ge n-1)\le \exp\left[-f_{n-1} \log\left(\tfrac{n-1}{\lambda}\right)+c_1 (n-1)^2\right].
\end{align} 
Our goal is to show that we can choose an $n_0$ (depending only on $\lambda_0, \beta$) so that if $n\ge n_0$ and $c_1$ is larger than a fixed constant then we can get an upper bound of a similar form for $P(N_\beta(\lambda)\ge n)$ as well.

Let $T_0=0<T_1<\dots<T_{M}$. Then
\begin{align}
P(N_\beta(\lambda)\ge n)
&= \sum_{i=0}^{M-1} P(N_\beta(\lambda)\ge n, T_i\le \tau_\lambda\le T_{i+1})
+P(N_\beta(\lambda)\ge n, \tau_\lambda\ge  T_{M}).\label{sum123}
\end{align}
Choose real numbers $\gamma_1< \dots< \gamma_M$ with  $\gamma_1 = 1/3, \gamma_M = 3$, $M\leq 3n$ and $\gamma_{i+1} - \gamma_i \leq 1/n$.  We  set $T_i= \gamma_i \frac{4\log(n/\lambda)}{\beta n}$ for $i=1,\dots,M$, if $n\ge \lambda_0$ then these are positive numbers.
We  will estimate the sum (\ref{sum123}) term by term.

Using the identity (\ref{eqrec})  from the proof of Lemma \ref{lem:rec} we can show that 
\begin{align}
P(N_\beta(\lambda) \geq n , T_i \leq \tau \leq T_{i+1}) & \leq P(N_\beta(\lambda e^{-\frac{\beta}{4} T_i}) \geq n-1)P(\tau < T_{i+1}).
\end{align}
From the definition of $T_i$ (if $n>\max(1,\lambda_0)$) we get
\begin{align}\label{bnd3}
  (1+ \gamma_i\frac{1}{n}) \log(n/\lambda)  - \frac{1}{n-1} \le \log \left( \frac{n-1}{\lambda e^{-\frac{\beta}{4}T_i}}\right).
 \end{align}
Note  that $\lambda T_i \leq \frac{12}{\beta}\frac{\lambda}{n} \log (n/\lambda)$ which can be made as small we wish by choosing $n_0$ large enough. 
Thus we can apply the upper bound in  Proposition \ref{prop:taubnd}  to get
\begin{align}\notag
\log P(\tau<T_{i+1})&\le  - \frac{2}{T_{i+1}} \WW^2( -\lambda T_{i+1}) + c\left(1+ \frac{1}{T_{i+1}}\right)\log (\tfrac{1}{\lambda T_{i+1}}) \\
& \notag\leq - \frac{\beta}{2\gamma_{i+1}}   \frac{n}{\log(n/\lambda)} \WW^2\left( \frac{\lambda}{n} \log \left( \tfrac{\lambda}{n}\right) \right) + C \left( 1 + \frac{\beta}{4\gamma_{i+1}} \frac{n}{ \log(n/\lambda)}  \right) \log (\tfrac{n}{\lambda})\\
& \leq - \frac{\beta}{2\gamma_{i+1}}  n \log (\tfrac{n}{\lambda}) + C \log (\tfrac{n}{\lambda}) + C n.
\end{align} 
Here we used the bounds  (\ref{W}),  $\gamma_{i+1} \ge 1/3$  and the  identity $\WW(x \log x) = \log x$.  The constant $C$ depends only on $\beta$ and $\lambda_0$, but may change from line to line.

Using the assumption (\ref{ind}) for $\lambda e^{-\frac{\beta}{4} T_i}\le \lambda_0$ we get 
\[
P(N_\beta(\lambda e^{-\frac{\beta}{4} T_i}) \geq n-1)\le \exp \left[- f_{n-1} \log \left( \tfrac{n-1}{\lambda e^{-\frac{\beta}{4} T_i}} \right)  +c_1 (n-1)^2\right].
\]
Using (\ref{bnd3}) we can write
\begin{align*}
-f_{n-1} \log \left( \frac{n-1}{\lambda e^{-\frac{\beta}{4}T_i}}\right) &\leq  -f_{n-1}(1+\frac{\gamma_i}{n}) \log (n/\lambda) +\frac{f_{n-1}}{n-1}\\
&\le -\left(f_{n-1}(1+\frac{\gamma_{i+1}}{n}) -\frac{\beta}{2}\right)\log (n/\lambda)+\frac{\beta}{2}(n-1),
\end{align*}
where we used $f_{n-1}\le \frac{\beta}{2}(n-1)^2$ and $\gamma_{i+1}-\gamma_i\leq \frac{1}{n}$, which holds for $i\ge 1$.  We also have
\[
 \log \left( \frac{1}{\lambda  T_{i+1}}\right) = \log(n/\lambda) - \log \log (n/\lambda) - \log (4\gamma_{i+1}/\beta) \leq \log(n/\lambda),
\]
where we used $1/3\le \gamma_i\le 3$ and that $n/\lambda$ can be made sufficiently large  by choosing $n_0$ appropriately.

Collecting all our estimates we get that for $1\le i\le M-1$ we have
\begin{align*}
\log P(N_\beta(\lambda)& \geq n  , T_i \leq \tau \leq T_{i+1}) \\
& \leq  -\left(f_{n-1}(1+\frac{\gamma_{i+1}}{n}) +\frac{\beta}{2\gamma_{i+1}}n\right) \log(n/\lambda) +c_1(n-1)^2 + C \log (\tfrac{n}{ \lambda }) + C n.
\end{align*}
Assuming $c_1 > C$ this yields
\begin{align}\notag
\log P(N_\beta(\lambda) \geq n& , T_i \leq \tau \leq T_{i+1})\\
& \leq \notag -\left(f_{n-1}(1+\frac{\gamma_{i+1}}{n}) +\frac{\beta}{2\gamma_{i+1}}n-C\right) \log(n/\lambda) +c_1 n^2 - c_1n \\
&\le -\left(f_{n-1}+\sqrt{2\beta f_{n-1}} -C\right) \log(n/\lambda) +c_1 n^2 - c_1n \label{1st_term}.
\end{align}
In the last step we used that $x(1+\frac{\gamma}{n})+\frac{\beta}{2\gamma}n\ge x+\sqrt{2\beta x}$ if  $x>0, \gamma>0$.

For the $i=0$ case we can use a similar argument (and the fact that $\gamma_1=\tfrac13$) to show that 
\[
\log P(N_\beta(\lambda) \geq n , T_0 \leq \tau \leq T_{1}) \leq -(f_{n-1}+\frac{\beta}{2} 3n- C)\log(n/\lambda)+c_1n^2 - c_1 n .
\]
Since $f_{n-1}\le \frac{\beta}{2}(n-1)^2$ we have $\frac{\beta}{2}3n \geq \sqrt{2 \beta f_{n-1}}$ and so  inequality (\ref{1st_term}) holds for $i=0$ as well.

To  bound the last term in (\ref{sum123}) we start with the bound
\[
P(N_\beta(\lambda)\ge n, T_{M}\le \tau) \le P(N_\beta(\lambda e^{-\frac{\beta}{4} T_M})\ge n-1).
\]
Recalling that $\gamma_M=3$ and  using arguments similar to what we used to obtain (\ref{1st_term}) we get
\begin{align}
P(N_\beta(\lambda)\ge n, T_{M}\le \tau)  \le -f_{n-1}\left(1+\frac{3}{n}\right) \log(n/\lambda)+c_1n^2 - c_1n\label{2nd_term}.
\end{align}
Applying the upper bounds  (\ref{1st_term})  and (\ref{2nd_term}) on the respective terms of the sum in (\ref{sum123}) yields
\begin{align*}
\log P(N_\beta(\lambda)\ge n)\le&-\min\left(f_{n-1}+\sqrt{2\beta f_{n-1}} -c_2,f_{n-1}\left(1+\frac{3}{n}\right)\right)\log(n/\lambda)\\&\qquad +c_1n^2 - c_1n + \log(3n+1),
\end{align*}
with  constants $c_1, c_2$ that only depend on $\lambda_0, \beta$.  
By choosing $c_1>2$ we can simplify this as
\begin{align*}
\log P(N_\beta(\lambda)\ge n)\le -\min\left(f_{n-1}+\sqrt{2\beta f_{n-1}} -c_2,f_{n-1}\left(1+\frac{3}{n}\right)\right)\log(n/\lambda)+c_1 n^2.
\end{align*}
Combining this result  with (\ref{trivup}) we get the following statement. There is a positive integer $n_0'$, and positive constants $c, c_1$ (all depending only on $\lambda_0, \beta$) so that if we set $f_{n_0}=n_0$ for an $n_0>n_0'$ and
\begin{align}\label{uprec}
f_{n}=\min\left(f_{n-1}+\sqrt{2\beta f_{n-1}} -c_2,f_{n-1}\left(1+\frac{3}{n}\right), \frac{\beta}{2} n^2 \right), \qquad \textup{for } n>n_0,
\end{align}
then
\[
\log P(N_\beta(\lambda)\ge n)\le -f_{n} \log\left(\tfrac{n}{\lambda}\right)+c_1 n^2
\]
for all $n\ge n_0$, $\lambda\le \lambda_0$.

To finish our proof we just have to show that we can choose a large enough $n_0$ as our starting point so that the solution of the recursion (\ref{uprec}) satisfies $f_{n}\ge \frac{\beta}{2}n^2-c n \log(n+1)$ with a constant $c$. 

This is fairly straightforward although somewhat technical. A simple computation shows that if we choose $n_0$ large enough then the following statements hold:
\begin{enumerate}
\item[(i)] $f_{n}$ is strictly increasing, 

\item[(ii)] $f_{n}=f_{n-1}\left(1+\frac{3}{n}\right)$ for $n_0<n<n_1$ for a certain $n_1$, 

\item[(iii)] $ f_{n}=\min\left(f_{n-1}+\sqrt{2\beta f_{n-1}} -c_2,\frac{\beta}{2} n^2 \right)$ for $n\ge n_1$, and

\item[(iv)] if $n$ is large enough and $f_{n-1}\ge \frac{\beta}{2}(n-1)^2-c  (n-1)\log n$ with a large enough $c$ then we also have
\[
f_{n-1}+\sqrt{2\beta f_{n-1}} -c\ge \frac{\beta}{2}n^2-c n\log (n+1).
\]

\end{enumerate}
Putting these statements together we get that $f_{n}\ge \frac{\beta}{2}n^2-c n \log(n+1)$  for a large enough $n$ which gives
\begin{align}\label{end}
\log P(N_\beta(\lambda)\ge n)\le -\left( \frac{\beta}{2}n^2-c n \log(n+1)  \right) \log\left(\tfrac{n}{\lambda}\right)+c_1 n^2
\end{align}
for a large enough $n$. Using (\ref{trivup}) we can now modify $c, c_1$ so that (\ref{end}) holds for all $n$ and $\lambda\le \lambda_0$, which finishes the proof. 
\end{proof}

$ $\\[0pt]
\noindent\textbf{Acknowledgements.} B. Valk\'o was partially supported by the National Science Foundation CAREER award DMS-1053280. The authors  thank B.~Vir\'ag for suggesting them the study of the overcrowding problem. They also thank   P.J.~Forrester, H. Nguyen and J.~Yin for valuable comments.

\def\cprime{$'$}

\end{document}